\documentclass[a4paper,12pt,reqno]{amsart}
\usepackage[T1]{fontenc}
\usepackage[utf8]{inputenc}
\usepackage[libertinus,vvarbb]{newtx}
\usepackage[margin=1in]{geometry}
\usepackage{enumitem}
\usepackage{tikz}
\usepackage{subcaption}
\usepackage{graphicx}
\usepackage[svgnames]{xcolor}
\usepackage{xparse}
\usepackage{xurl}
\usepackage[noadjust]{cite}
\usepackage[bookmarksdepth=2]{hyperref}
\usepackage{comment}
\usepackage{mleftright}
\mleftright

\hypersetup{colorlinks=true,urlcolor=MidnightBlue,linkcolor=MidnightBlue,citecolor=MidnightBlue}

\usepackage[normalem]{ulem}

\newcommand{\stkout}[1]{\ifmmode\text{\sout{\ensuremath{#1}}}\else\sout{#1}\fi}
\newcommand{\crsout}[1]{\ifmmode\text{\xout{\ensuremath{#1}}}\else\xout{#1}\fi}

\newtheorem{theorem}{Theorem}[section]
\newtheorem{thm}{Theorem}[section]
\newtheorem{lemma}[theorem]{Lemma}
\newtheorem{lem}[theorem]{Lemma}
\newtheorem{corollary}[theorem]{Corollary}

\theoremstyle{definition}

\newtheorem{example}[theorem]{Example}
\newtheorem{remark}[theorem]{Remark}

\newcommand{\ind}{\mathbb{1}}
\newcommand{\pr}{\mathbb{P}}
\newcommand{\ex}{\mathbb{E}}

\newcommand{\R}{\mathbb{R}}

\newcommand{\ph}{\varphi}
\newcommand{\eps}{\varepsilon}
\newcommand{\thet}{\vartheta}

\newcommand{\lv}{\lvert}
\newcommand{\rv}{\rvert}
\newcommand{\lV}{\lVert}
\newcommand{\rV}{\rVert}

\renewcommand{\le}{\leqslant}
\renewcommand{\ge}{\geqslant}

\NewDocumentCommand{\formula}{ssom}{%
 \IfBooleanTF{#1}{%
  \IfBooleanTF{#2}{%
   \IfValueTF{#3}%
    {\begin{align}\label{#3}\begin{gathered}#4\end{gathered}\end{align}}%
    {\begin{gather}#4\end{gather}}%
  }{%
   \IfValueTF{#3}%
    {\begin{align}\label{#3}\begin{aligned}#4\end{aligned}\end{align}}%
    {\begin{gather*}#4\end{gather*}}%
  }%
 }{%
  \IfValueTF{#3}%
   {\begin{align}\label{#3}#4\end{align}}%
   {\begin{align*}#4\end{align*}}%
 }%
}

\newcommand{\form}{\mathscr{E}}
\DeclareMathOperator{\dom}{dom}

\begin{document}

\title{Large time behavior of L\'evy processes and their nonlocal Schrödinger semigroups}
\author{Mateusz Kwaśnicki {$^\dag$}, Phanuel Mariano {$^\ddag$}, Hugo Panzo, Jing Wang {$^\S$}}
\thanks{$\dag$ Work supported by the National Science Centre, Poland, grant no.\@ 2023/49/B/ST1/04303}
\thanks{$\ddag$ Work supported by the National Science Foundation, USA, grant no. DMS-2316968.}
\thanks{$\S$ Work supported by the National Science Foundation, USA, grant no. DMS-2246817.}
\address{Mateusz Kwaśnicki \\ \textnormal{Department of Analysis and Stochastic Processes \\ Wrocław University of Science and Technology \\ Wybrzeże Wyspiańskiego 27 \\ 50-370 Wrocław, Poland}}
\email{\href{mailto:mateusz.kwasnicki@pwr.edu.pl}{\textsf{mateusz.kwasnicki@pwr.edu.pl}}}
\address{Phanuel Mariano \\ \textnormal{Department of Mathematics\\ Union College\\ 807 Union Street \\ Schenectady, NY 12308}}
\email{\href{mailto:marianop@union.edu}{\textsf{marianop@union.edu}}}
\address{Hugo Panzo \\ \textnormal{Department of Mathematics and Statistics \\ Saint Louis University \\ St.~Louis, MO 63103, USA}}
\email{\href{mailto:hugo.panzo@slu.edu}{\textsf{hugo.panzo@slu.edu}}}
\address{Jing Wang \\ \textnormal{Department of Mathematics, Department of Statistics \\ Purdue University, West Lafayette, IN 47907, USA}}
\email{\href{mailto:jingwang@purdue.edu}{\textsf{jingwang@purdue.edu}}}
\keywords{exit times, spectral asymptotics, symmetric stable process, Lévy  process, unbounded domains, Feynman--Kac semigroups, Schrödinger operators, $L^p$ independence}
\subjclass[2020]{Primary 60J76, 35P15, 60J45; Secondary   60G52, 35J25, 35K08, 35R11
}

\begin{abstract}
We study the large time asymptotics of the Feynman--Kac semigroups of the symmetric Lévy process on unbounded open sets. Our main result proves the exact exponential asymptotic decay rate for the survival probability given in terms of the bottom of the spectrum of the associated nonlocal Schr\"odinger operator. We also prove quantitative upper bounds with an explicit polynomial correction. The proof is probabilistic and is done by decomposing the Lévy process into a finite-range jump process collecting the small jumps and an independent compound Poisson process describing the large jumps. Our approach gives a direct link between the spectral properties of nonlocal Schrödinger operators and pointwise decay of survival probabilities, which was previously unknown for jump processes. 

\end{abstract}

\maketitle

%
%

\section{Introduction}

We consider a symmetric Lévy process $X_{t}$ in $\mathbb{R}^{d}$
with a covariance matrix $a=\left(a_{ij}:i,j=1,\dots,d\right)$ of
the Brownian component and with a symmetric Lévy measure $\nu$ on
$\mathbb{R}^{d}\setminus\left\{ 0\right\} $ which describes the
jumps of $X_{t}$ . Its generator is given by 
\[
\mathcal{L}f\left(x\right)=\frac{1}{2}\sum_{i,j=1}^{d}a_{ij}\frac{\partial^{2}}{\partial x_{i}\partial x_{j}}f\left(x\right)+\text{P.V.}\int_{\mathbb{R}^{d}\setminus\left\{ 0\right\} }\left(f\left(x+y\right)-f\left(x\right)\right)\nu\left(dy\right).
\]
When $\alpha\in\left(0,2\right)$,  
\[
\nu\left(dy\right)=c_{d,\alpha}\left|y\right|^{-d-\alpha}dy,\quad c_{d,\alpha}=2^{\alpha}\pi^{-d/2}\Gamma(\tfrac{d+\alpha}{2})|\Gamma(-\tfrac{\alpha}{2})|^{-1}
\]
and $a_{i,j}=0$ for $i,j=1,2,\dots,d$, the process $X_{t}$ is the isotropic
$\alpha$-stable Lévy process. Its generator is
the Fractional Laplacian $\mathcal{L}=-\left(-\Delta\right)^{\alpha/2}$.
When $\nu=0$ and $a_{ij}=\delta_{ij}$, the process $X_t$ is the Brownian
motion with generator $\mathcal{L}=\frac{1}{2}\Delta$.  

Let $D\subset\R^d$ be a connected domain, and let
\begin{align*}\label{eq-tau-D}
    \tau_{D}:=\inf\left\{ t>0:X_{t}\notin D\right\} .
\end{align*}
be the first exit time of $X_t$ from $D$. For $x\in D$, the distribution function $\mathbb{P}_{x}\left(\tau_{D}>t\right)$ of $\tau_D$ records the survival probability of $X_t$ in $D$. The large time behavior of this survival probability has been widely studied. When $D$ is bounded, the generator of $X_t$ killed upon exiting $D$ has a discrete spectrum on $L^{2}\left(D\right)$. Using the spectral expansion of the killed heat kernel and some mild assumptions, one obtains the following asymptotic decay
\[
\lim_{t\to\infty}\frac{1}{t}\log\mathbb{P}^{x}\left(\tau_{D}>t\right)=-\lambda_{1}\left(D\right),
\]
where $\lambda_{1}\left(D\right)$ is the first Dirichlet eigenvalue of the killed process in $D$. This estimate is classical for the Brownian motion and obtained for $\alpha$-stable processes in \cite{Chen-Song-1997,Kulczycki-1998}.

This spectral-expansion argument does not apply to unbounded domains, since the Dirichlet generator may not have purely discrete $L^2\left(D\right)$ spectrum. Nevertheless, when an unbounded domain has a positive bottom of the Dirichlet Laplacian $-\frac{1}{2}\left.\Delta\right\vert_D$, given by 
\[
\lambda_D:=\inf \biggl\{ \int_{D}\frac{1}{2}\left|\nabla f\right|^{2}dx : f\in C_{c}\left(D\right) , \lVert f \rVert_2 = 1 \biggr\},
\]  
one still expects the survival probability to decay exponential at the rate of $\lambda_D$. Such an asymptotic estimate is known for local processes. For instance, B. Simon  (\cite{Simon-1981,Simon-1981b}) proved that for the Brownian motion,
\begin{equation}
\lim_{t\to\infty}\frac{\log\mathbb{P}^{x}\left(\tau_{D}>t\right)}{t}=-\lambda_D, \quad x\in D\label{eq:coj-main}
\end{equation}
for any unbounded domain $D\subset \mathbb{R}^d$.  See also \cite[Theorem 3.1.2]{Sznitman-1998} for a proof of \eqref{eq:coj-main}. 
  Many related results for local processes have since been established, including in the works \cite{Vogt-2019a,Ouhabaz-2006,Sznitman-1998,Bakharev-Nazarov-JFA-2021,Lifshits-Nazarov-2019}. More recently, \cite{Mariano-Wang-2022} extended such estimates to general Hunt processes satisfying sub-Gaussian heat kernel bounds in unbounded domains on metric measure spaces.

Simon’s result is in fact formulated in a more general setting, namely for the Feynman--Kac semigroup associated with the Schr{\"o}dinger operator
$
-\frac{1}{2}\Delta+V,
$
under suitable assumptions on the potential $V$. For a domain $D\subset\mathbb{R}^{d}$,
this semigroup is given by
\[
P_{t}^{D,V}f\left(x\right)=\mathbb{E}_{x}\left[f\left(X_{t}\right)\exp\left(-\int_{0}^{t}V\left(X_{s}\right)ds\right)1_{\left\{ \tau_{D}>t\right\} }\right].
\]
Simon proved that for any $x\in D$, 
\[
\lim_{t\to\infty}\frac{\log P_{t}^{D,V}\ind_{D}\left(x\right)}{t}=\lim_{t\to\infty}\frac{\log{}\left\Vert P_{t}^{D,V}\right\Vert _{\infty\to \infty}}{t}=-\lambda_{D,V},
\]
where
\[
\lambda_{D,V}:=\inf \biggl\{ \int_{D}\frac{1}{2}\left|\nabla f\right|^{2}dx + \int_D \lv f(x) \rv^2 V(x) dx : f\in C_{c}\left(D\right) , \lVert f \rVert_2 = 1 \biggr\}.
\]

However, a gap remains in the literature concerning the sharp limiting behavior of the survival probability of non-local jump processes in unbounded domains. In particular we would like to consider a  symmetric Lévy process $X_t$  in $\R^d$ with Dirichlet form $\form$ in an unbounded domain 
$D\subset \mathbb{R}^d$ with  $\lambda_D>0$, where 
\formula{
 \lambda_D & = \inf \biggl\{ \form(f) : f \in C_c^\infty(D) , \, \lV f \rV_2 = 1 \biggr \} .
}
Although it is natural to expect that \eqref{eq:coj-main} remains valid in this setting,  to the authors' knowledge, a proof does not seem to
be available in the literature, even for isotropic $\alpha$-stable processes
on $\mathbb{R}^d$. Several works have studied related questions, including large-time
asymptotics, heat kernel estimates, and bounds on exit times for non-local
processes; see, among others, \cite{Banuelos-Mariano-Wang-2024,Chen-Kim-Song-2010,Kim-Song-2014,Siudeja-2006, Chen-Kim-Wang-2022,Banuelos-Latala-Mendez-2001,Bogdan-Grzywny-2010,Barrier-Bogdan-Grzywny-Ryznar-2015,Kwasnicki-2009,Giorgi-Smits-2010,Panzo2021,Chen-Kim-Wang-2019,KaletaLorinczi2015,ChenWang2016}. However, many of these results either rely on bounded domains or impose regularity assumptions on the boundary. 

In \cite{Mariano-Wang-2025}, the authors obtained the asymptotic bounds
\begin{equation}\label{Mariano-WangBound}
-\lambda_D\leq\liminf_{t\to\infty}\frac{\log\mathbb{P}_{x}\left(\tau_{D}>t\right)}{t}\leq\limsup_{t\to\infty}\frac{\log\mathbb{P}_{x}\left(\tau_{D}>t\right)}{t}\leq-\frac{\lambda_D}{1+\frac{d_{1}}{4\alpha_{1}}},    
\end{equation}
for symmetric jump processes in unbounded domains  with $\lambda_D>0$ on metric measure spaces. The proof relies on heat kernel estimates and certain iteration techniques. 
The constants $d_1$ and $\alpha_1$ come from the volume growth and scaling assumptions on the underlying space and process. In the case of an isotropic $\alpha$-stable process on $\mathbb{R}^d$, one has $d_1=d$ and $\alpha_1=\alpha$. In a related work, Chen and Yu \cite{Chen-Yu-2026} obtained the upper bound
$\frac{-\lambda_D}{1+d_1 / (2 \alpha_1)}$
for the limsup in \eqref{Mariano-WangBound}. This estimate applies to a more general class of processes, but is weaker than that in \eqref{Mariano-WangBound}. Thus, while these results give useful upper bounds, they do not identify the exact exponential decay rate in \eqref{eq:coj-main}. To the best of our knowledge, the only previously known unbounded
domains with continuous spectrum and  $\lambda_D>0$ for which the
sharp asymptotic was established are the increasing horn-shaped domains
considered in \cite[Theorem 1.5]{Mariano-Wang-2025}.
The main difficulty comes from the polynomial tails of  non-local processes, which prevent standard localization methods from obtaining matching upper and lower bounds. The exact large time asymptotics of jump processes in unbounded
domains with  $\lambda_D=0$ have been widely studied, for example in \cite{Siudeja-2006,Mendez-Hernandez-2002,Mendez-Hernandez-2007,Banuelos-Bogdan-2004,Banuelos-Bogdan-2005,Kwasnicki-2009,Chen-Tokle-2011,BogdanPalmowskiWang2018}. 

In this paper, we develop a new method that allows us to prove the sharp exponential decay rate of the survival probability of a Lévy process in $\R^d$. It reveals the connection  between the spectral properties of non-local Schrödinger operators and the pointwise large-time asymptotics of survival probabilities of jump processes irreducible in $D$. We say that the killed process $X_t^D$  is \emph{irreducible} in $D$ if for every $x \in D$ and every open set $A \subseteq D$ with nonempty interior, there exists $t>0$ such that
\[
\pr^x(X_t \in A, \, t < \tau_D) > 0
\]
The main result is as follows. 

\begin{theorem}
\label{thm:general}
Let $X_t$ be a symmetric Lévy process in $\R^d$ with Dirichlet form $\form$, and assume that for $t$ large enough, $X_t$ has a square integrable density function. Let $D \subseteq \R^d$ be a nonempty open set, and $\tau_D$ the first exit time of $X_t$ from $D$. Denote by $\lambda_D$ the bottom of the spectrum of $X_t$ on $D$:
\formula{
 \lambda_D & = \inf \biggl\{ \form(f) : f \in C_c^\infty(D) , \, \lV f \rV_2 = 1 \biggr \} .
}
Then for every $x \in D$ we have
\formula{
 \limsup_{t \to \infty} \frac{\log \pr^x(\tau_D > t)}{t} & \le -\lambda_D.
}
If in addition, the killed process is irreducible in $D$, 
then
\formula{
 \lim_{t \to \infty} \frac{\log \pr^x(\tau_D > t)}{t} & = -\lambda_D .
}
More generally, let $V$ be a nonnegative, locally bounded Borel function on $D$, and
\formula{
 \lambda_{D, V} & = \inf \biggl\{ \form(f) + \int_D \lv f(x) \rv^2 V(x) dx : f \in C_c^\infty(D) , \, \lV f \rV_2 = 1 \biggr \}
}
be the bottom of the spectrum of the Schrödinger operator with potential $V$. Then for every $x \in D$ we have
\formula{
 \limsup_{t \to \infty} \frac{1}{t} \log \biggl( \ex^x \biggl[ \exp \biggl( -\int_0^t V(X_s) ds \biggr) \ind_{\{\tau_D > t\}} \biggr] \biggr) & \le -\lambda_{D, V} ,
}
If the killed process in $D$ is irreducible, then
\formula{
 \lim_{t \to \infty} \frac{1}{t} \log \biggl( \ex^x \biggl[ \exp \biggl( -\int_0^t V(X_s) ds \biggr) \ind_{\{\tau_D > t\}} \biggr] \biggr) & = -\lambda_{D, V} ,
}
\end{theorem}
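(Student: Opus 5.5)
The lower bound (under irreducibility) and the upper bound need different arguments; I will take the lower bound first, since it is the standard one.

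\emph{Lower bound.} Fix $\eps>0$ and choose $f\in C_c^\infty(D)$ with $\lV f\rV_2=1$ and $\form(f)+\int_D f^2V\le\lambda_{D,V}+\eps$. Let $U$ be a bounded open set with $\operatorname{supp} f\subseteq U\subseteq\overline U\subseteq D$ (taking $U$ to be a finite union of balls works). On $U$ the potential $V$ is bounded, and the killed Feynman--Kac semigroup $P^{U,V}_t$ is a symmetric semigroup on $L^2(U)$. Its form restricted to $C_c^\infty(U)$ gives $\lambda_{U,V}\le\lambda_{D,V}+\eps$. By the spectral theorem, $\langle f,P^{U,V}_t f\rangle\ge e^{-t(\lambda_{D,V}+\eps)}$ by Jensen applied to the spectral measure of $f$. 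Hence for some $y\in U$, and in fact on a set of positive measure, $P^{U,V}_t\ind_U(y)\gtrsim e^{-t(\lambda_{D,V}+2\eps)}$ along a sequence $t\to\infty$. A Chapman--Kolmogorov/submultiplicativity argument upgrades this to all large $t$. To pass from such $y$ to an arbitrary $x\in D$, I use irreducibility: there is $s$ with $\pr^x(X_s\in A,\,s<\tau_D)>0$ for a suitable open $A\subseteq U$. Combining this with the Markov property and the lower bound uniform over $y\in A$ gives the same rate at $x$. The uniformity over $A$ can be arranged by averaging against $f$, which also uses local boundedness of $V$ to control $\int_0^s V$.

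\emph{Upper bound; main obstacle.} The $L^2$ bound is $\lV P^{D,V}_t\rV_{2\to2}=e^{-t\lambda_{D,V}}$, and the difficulty is converting it to a pointwise bound at $x$ for a nonlocal process on an unbounded $D$. The abstract suggests the route. Fix $R$ and split $X=Y+Z$, where $Y$ is the Lévy process with jumps of size $<R$ (plus the Gaussian part) and $Z$ is an independent compound Poisson process with rate $\nu(|y|\ge R)$. Conditioning on the Poisson jump times $0<T_1<\dots<T_N<t$ and jumps $J_i$, survival of $X$ in $D$ forces $Y$, shifted by the accumulated jumps, to survive in the corresponding translates of $D$ on each interval between jumps. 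Between jumps I use $Y$'s semigroup $P^{Y,D,V}$. Its form is $\form_Y=\form-\form_{\text{large}}$, with $\form_{\text{large}}(f)\le 2\nu(|y|\ge R)\lV f\rV_2^2=:2\mu_R\lV f\rV_2^2$, so $\lambda^Y_{D,V}\ge\lambda_{D,V}-2\mu_R$. A cleaner way to organise this is the perturbation identity: $P^{D,V}_t$ is the Dyson series of $P^{Y,D,V}_t e^{-\mu_R t}$ perturbed by the bounded jump operator $K$ with $Kg(x)=\int_{|y|\ge R} g(x+y)\ind_D(x+y)\nu(dy)$. The payoff of using $Y$ is that its jumps have bounded range. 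This gives finite propagation up to exponentially small (Gaussian-type) errors, so $P^{Y,D,V}_t\ind_D(x)$ is, up to $e^{-ct^2/t}$-type tails, the same as $P^{Y,D_t,V}_t\ind_{D_t}(x)$, where $D_t=D\cap B(x,Ct)$ has polynomial volume. For the killed semigroup of $Y$ on $D_t$ I apply ultracontractivity for time $1$, using the square-integrable density of $X_t$ for large $t$ transferred to $Y$ by independence. Together with the $L^2$ bound this gives
\[
P^{Y,D_t,V}_t\ind\le C\,|D_t|^{1/2}e^{-(t-2)\lambda^Y_{D,V}},
\]
which is $\mathrm{poly}(t)\,e^{-t(\lambda_{D,V}-2\mu_R)}$.

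Summing the Dyson series with $\lV K\rV\le\mu_R$ costs at most another factor $e^{C\mu_R t}$. Letting $R\to\infty$, so that $\mu_R\to0$, then yields the limsup bound $-\lambda_{D,V}$. The plain survival statements are the case $V=0$. I expect the hard step to be making the Dyson expansion pointwise: the intermediate operators act from $L^\infty$ and the jump positions are unbounded. My first attempt is to apply the localisation and ultracontractivity bound to each term separately, using the fact that the operator norm on $L^\infty(B(x,Ct))$ loses only polynomial factors. Where the number of jumps is large, I would instead crudely bound those terms by the Poisson tail.
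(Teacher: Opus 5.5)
\textbf{Upper bound.} Your ingredients match the paper's: the split at a jump size, $\lambda^Y_{D,V}\ge\lambda_{D,V}-2\mu_R$, a Chernoff--Bennett tail for the finite-range part, and one ultracontractive step combined with an $L^2$ bound. The gap is in how you assemble them, which is the step you flag yourself, and the plan fails there as written.

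In the Dyson series the large jumps are averaged into the operator $K$. The only ball available for localising the final function $\ind_D$ is then a ball around $x$ for the \emph{full} process. But $\pr^x(\lv X_t-x\rv\ge\rho)$ decays only polynomially in $\rho$ (like $t\rho^{-\alpha}$ in the stable case). Pushing it below $e^{-\lambda_{D,V}t}$ forces $\rho$ to be exponential in $t$, and then $\lV\ind_{B(x,\rho)}\rV_2$ costs an exponential factor.

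Localising factor by factor fails for a different reason: it costs a polynomial factor per inter-jump factor. Even the typical terms, with $n\approx\mu_R t$ jumps, carry a loss $t^{cn}=e^{cn\log t}$, which is superexponential. The Poisson tail only lets you discard $n>At$. In addition, inter-jump gaps are typically $O(1)$, while the hypothesis gives ultracontractivity only for large times, so it cannot be applied to each $P^{Y,D,V}_{s_{k+1}-s_k}$.

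The missing idea is to \emph{condition on the whole path of $Z$} instead of integrating its jumps into $K$.
\begin{itemize}
\item Then $M_t$ is a time-inhomogeneous functional of your $Y$ (the paper's $\tilde X$), killed in the moving sets $D-Z_u$. Each piece has bottom of spectrum $\tilde\lambda$ by translation invariance.
\item Backward induction over the jump times gives $\lV P^M_{s,t}\rV_{2\to2}\le e^{-\tilde\lambda(t-s)}$ (Lemma~\ref{lem:varying:l2}).
\item A single ultracontractive step on a fixed, large interval $[0,s]$, using $M\le 1$ there whatever jumps occur, gives Corollary~\ref{cor:truncated}.
\item One then localises $\tilde X_t$, not $X_t$: write $\ex^x[M_t]=\ex^x[\ind_{B(x,t^2)}(\tilde X_t)M_t]+\ex^x[\ind_{\R^d\setminus B(x,t^2)}(\tilde X_t)M_t]$. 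The first term is bounded by Corollary~\ref{cor:truncated}, the second by Lemma~\ref{lem:bennett}, since $\tilde X$ is independent of $Z$.
\end{itemize}
In real coordinates the ball $B(x+Z_t,t^2)$ follows the large jumps, so their unbounded size is irrelevant. The polynomial loss occurs once, and no Poisson tail is needed.

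\textbf{Lower bound.} There is also a gap in the transfer to the point $x$. Jensen on the spectral measure does give $\langle f,P^{U,V}_tf\rangle\ge e^{-t(\lambda_{D,V}+\eps)}$. However, this only yields $P^{U,V}_t\ind_U\gtrsim e^{-t(\lambda_{D,V}+\eps)}$ on a $t$-dependent set $E_t$, whose measure may itself be exponentially small. Irreducibility is purely qualitative ($P^M_s(x,A)>0$ for open $A$), and there is no Harnack inequality or heat-kernel lower bound. So nothing provides the uniformity over a fixed open $A$ that you invoke. ``Averaging against $f$'' would require $P^{D,V}_s(x,dy)\ge c\,f(y)\,dy$, which is not available.

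What is needed is a $t$-independent positive object, namely a ground state.
\begin{itemize}
\item Since $U$ is bounded and $X$ has a square-integrable density for large times, $P^{U,V}_t$ is Hilbert--Schmidt. It therefore has a nonnegative eigenfunction $\ph$ with $P^{U,V}_t\ph=e^{-\lambda_{U,V}t}\ph$ and $\lambda_{U,V}\le\lambda_{D,V}+\eps$.
\item $\ph$ is bounded by ultracontractivity.
\item Then $P^{D,V}_{s+t}\ind(x)\ge e^{-\lambda_{U,V}t}\,\ex^x[M_s\ph(X_s)]/\sup\ph$, with the expectation positive by irreducibility.
\end{itemize}
This is the argument of Lemma~\ref{lem:lower}, except that the paper makes the problem compact with a confining potential vanishing on the support of $f$ rather than with a bounded $U$.
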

\begin{remark}
    Note that by shifting, our proof applies to locally bounded $V$ that is bounded from below.
\end{remark}

The main idea for the proof of Theorem \ref{thm:general} is to decompose $X_t$ into the sum of two independent Lévy processes, $Y_t$ and $Z_t$, where $Y_t$ is a Lévy process with jumps not exceeding $r_0$, and $Z_t$ is a compound Poisson process that collects all jumps of $X_t$ larger than $r_0$. One can then rewrite the survival probability for $X_t$ in $D$ as the survival probability for $Y_t$ in a time-dependent domain $D - Z_t$. By taking away the large jumps, we can show that the truncated Lévy  process $Y_t$ decays super-exponentially fast in the proper regime. This gives us the fast enough decay to analyse the survival probability and show it has the right exponential decay. 

Connecting $\lambda_D$ and $\lambda_{D, V}$ to the large-time survival probability in Theorem \ref{thm:general} opens up many applications for jump processes. For instance, the seminal results by Chen, Kim and Song in \cite{Chen-Kim-Song-2010} established heat kernel estimates for $\alpha$-stable processes on $C^{1,1}$ open sets. Their estimates cover both small and large times, but in the large-time regime the domain $D$ is assumed to be bounded. In that case, they prove that, for $t>T$, $p_D\left(t,x,y\right)\asymp e^{-\lambda_1 t} \delta_D(x)^{\alpha/2}\delta_D(y)^{\alpha/2}$ for $t>T$ where $\lambda_1$ is the first Dirichlet eigenvalue and $\delta_D(x)=\text{dist}(x,D^c)$. For unbounded domains, one should expect that large-time heat kernel estimates to be governed by the bottom of the spectrum with a form of $p_D\left(t,x,y\right)\asymp e^{-\lambda_D t} K(t,x,y)$ for $t>T$ for some $K(t,x,y)$. 

One immediate application of Theorem  \ref{thm:general} is the $L^p$ independence of the non-local Feynman--Kac semigroup associated to the Schr\"odinger operators for Lévy processes.

\begin{corollary}[$L^p$-independence]
Consider $X_t$ as in Theorem \ref{thm:general} and assume that $D$ is irreducible for $X_t$.  If $P_t^D$ is the Feynman--Kac
semigroup given by 
\[
P_{t}^{D,V}f\left(x\right)=\mathbb{E}^{x}\left[f\left(X_{t}\right)\exp\left(-\int_{0}^{t}V\left(X_{s}\right)ds\right)1_{\left\{ \tau_{D}>t\right\} }\right],
\]
then, for any $p\in\left[1,\infty\right]$, we have
\[
\lim_{t\to\infty}\frac{1}{t}\log\left\Vert P_{t}^{D,V}\right\Vert _{p\to p}=\lim_{t\to\infty}\frac{1}{t}\log\left\Vert P_{t}^{D,V}\right\Vert _{2\to2}=\lambda_{D,V}.
\]
\end{corollary}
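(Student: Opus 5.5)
The statement has a sign typo: since the semigroup is contractive, the limit should be $-\lambda_{D,V}$. I will prove it in that form, namely
\[
\lim_{t\to\infty}\frac1t\log\lV P_t^{D,V}\rV_{p\to p}=-\lambda_{D,V}.
\]

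\emph{Case $p=2$.} The semigroup $P_t^{D,V}$ is symmetric on $L^2(D)$. Its generator is the self-adjoint operator $-H$, where $H$ is associated with the closed form
\[
\form(f)+\int_D \lv f\rv^2 V\,dx
\]
on the closure of $C_c^\infty(D)$. Since $V$ is locally bounded, $C_c^\infty(D)$ is a form core. The spectral theorem then gives $\lV P_t^{D,V}\rV_{2\to2}=e^{-t\lambda_{D,V}}$ exactly, with $\lambda_{D,V}=\inf\sigma(H)$ being the variational quantity in Theorem~\ref{thm:general}.

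\emph{Case $p=\infty$.} The semigroup is positivity preserving, so $\lV P_t^{D,V}\rV_{\infty\to\infty}=\sup_{x\in D}P_t^{D,V}\ind_D(x)$.
\begin{itemize}
\item Lower bound: this supremum is at least $P_t^{D,V}\ind_D(x_0)$ for any fixed $x_0$. The irreducible case of Theorem~\ref{thm:general} then gives $\liminf_t \frac1t\log\lV P_t^{D,V}\rV_{\infty\to\infty}\ge-\lambda_{D,V}$.
\item Upper bound: Theorem~\ref{thm:general} is pointwise, so a uniform version is needed. By subadditivity, $\lV P_{t+s}\rV\le\lV P_t\rV\lV P_s\rV$, hence $\lim_t\frac1t\log\lV P_t\rV_{\infty\to\infty}$ exists and equals $\inf_t\frac1t\log\lV P_t\rV_{\infty\to\infty}$. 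It therefore suffices to show $\lV P_t\rV_{\infty\to\infty}\le C(t)e^{-\lambda_{D,V}t}$ with $C(t)$ subexponential.
\end{itemize}

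I would obtain this uniform bound by $L^2$-to-$L^\infty$ smoothing. The hypothesis gives a square integrable density $p_{t_0}$ of $X_{t_0}$ for some $t_0$, which yields
\[
P_{t_0}^{D,V}g(x)\le \ex^x\lv g(X_{t_0})\rv\le \lV p_{t_0}\rV_2\lV g\rV_2 .
\]
The obstacle is that $\ind_D$ need not lie in $L^2$ when $D$ is unbounded, so one cannot simply write $P_t\ind_D=P_{t_0}P_{t-2t_0}P_{t_0}\ind_D$ and apply the $L^2$ norm bound. This is exactly the difficulty that the proof of Theorem~\ref{thm:general} resolves, via the decomposition $X=Y+Z$ and the super-exponential localization of $Y$. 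I expect its upper-bound argument to already produce a bound uniform in $x$, since translation invariance of the Lévy process makes the estimates depend only on $D-x$ through $\lambda_D$-type quantities. So the plan is to reread that proof, check that its constants do not depend on $x$, and conclude $\sup_x P_t^{D,V}\ind_D(x)\le e^{-(\lambda_{D,V}-\eps)t}$ for large $t$. I regard this uniformity check as the main step.

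\emph{Case $p=1$.} By symmetry of the semigroup, $\lV P_t\rV_{1\to1}=\lV P_t\rV_{\infty\to\infty}$.

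\emph{Case $1<p<\infty$.} Riesz--Thorin interpolation between $1$ and $\infty$ gives $\lV P_t\rV_{p\to p}\le \lV P_t\rV_{\infty\to\infty}$, so the limsup is at most $-\lambda_{D,V}$. For the reverse inequality, interpolate between $p$ and its conjugate exponent $p'$:
\[
\lV P_t\rV_{2\to2}\le \lV P_t\rV_{p\to p}^{1/2}\lV P_t\rV_{p'\to p'}^{1/2}=\lV P_t\rV_{p\to p},
\]
where the equality again uses symmetry. Combined with the $p=2$ case, this gives $\liminf_t \frac1t\log\lV P_t\rV_{p\to p}\ge-\lambda_{D,V}$ and finishes the proof.
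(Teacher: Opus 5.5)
Your proposal is correct and is the argument the paper intends. The paper only says the corollary follows from Theorem~\ref{thm:general} by Simon's interpolation argument, and your steps are that argument: the exact $L^2$ rate from the spectral theorem, a uniform-in-$x$ upper bound for $p=\infty$, duality for $p=1$, and Riesz--Thorin in between, with the sign correctly read as $-\lambda_{D,V}$.

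The uniformity you flag as the main step does hold. The two bounds in the proof of Lemma~\ref{lem:upper} involve only $\lV \tilde P_s \rV_{2,\infty}$, $\tilde\lambda_{\tilde M}$, $\lV \ind_{B(x,R)} \rV_2$ and the translation-invariant Bennett estimate, none of which depend on $x$.

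Your interpolation inequality $\lV P_t\rV_{2\to2}\le \lV P_t\rV_{p\to p}^{1/2}\lV P_t\rV_{p'\to p'}^{1/2}$, taken at $p=\infty$ and $p'=1$, already gives the $p=\infty$ lower bound. So irreducibility is not actually needed for this step. This route also avoids a minor technicality: evaluating at a single point $x_0$ bounds a pointwise supremum, not the essential supremum that defines the $L^\infty$ operator norm.
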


The proof follows by the same interpolation arguments given in \cite{Simon-1981,Simon-1981b}. We also note the various related works on the $L^p$ independence for non-local semigroups in \cite{ChenKimKuwae2019,TakedaWada2016,Tawara2010,GuillinLuNectouxWu2026}.

Simon’s original result for diffusions not only gave an asymptotic limit, but also a stronger uniform bound on $P_t^{D,V}\ind_D(x)$, valid for all $x\in D$ and $t>0$. In particular he obtained the following, now classical, upper bound in \cite[Eq.  1.9]{Simon-1981}, with suitable assumptions on the potential $V$,
\begin{equation}\label{eq:simon:d/2}
P_{t}^{D,V}\ind_D\left(x\right)\leq C\left(1+t\right)^{d/2}e^{-\lambda_{D,V} t},\,\,\,t>0,x\in D.
\end{equation}
The large time behavior of $P_{t}^D$ for diffusions have been widely
studied. In particular Simon and Davies went on to prove several results for
different classes of potentials \cite{Simon-1981,Davies-Simon-1991,Simon-1981b}. Ouhabaz in \cite[Theorem 4]{Ouhabaz-2006} was able to improve \eqref{eq:simon:d/2}
to 
\begin{equation}
P_{t}^{D,V}\ind_D\left(x\right)\leq C\left(1+t\log t\right)^{d/4}e^{-\lambda_{D,V} t},\,\,\,t>0,x\in D.\label{eq:Ouhabaz}
\end{equation}
The current sharpest result for local processes is given by Vogt in
\cite[Theorem 1.1]{Vogt-2019a} where he removes the $\log$ term and obtains the following,
\begin{equation}
P_{t}^{D,V}\ind_D\left(x\right)\leq C\left(1+t\right)^{d/4}e^{-\lambda_{D,V} t},x\in D.\label{eq:Vogt}
\end{equation}
It is remarked by Vogt \cite[Remark 1.2]{Vogt-2019a} that the exponent $\frac{d}{4}$ is sharp
in dimension $d=4$. This is because Simon in \cite[Theorem 4.1]{Simon-1981b} gave examples of potentials $V$ in $\R^4$ where $\left\Vert P^{D,V}_{t}\right\Vert _{\infty\to\infty}$
grows like $\frac{t}{\log t}$. Up to now, it was still unknown
what the sharp exponent is for other dimensions (See Section \ref{sec:AI}).

Our techniques also allow us prove the following sharp exponential decay bound with  an explicit polynomial correction for all $t>0$ and
$x\in D$ for the isotropic stable Lévy process with index $\alpha \in (0, 2)$. 
\begin{theorem}
\label{thm:stable}
Suppose that $X_t$ is the isotropic stable Lévy process with index $\alpha \in (0, 2)$. Let $D \subseteq \R^d$ be a nonempty open set. There exists a constant $C>0$ such that for every $x \in D$ we have
\begin{equation}\label{MainEstimate-1}
 P_{t}^{D}\ind_D\left(x\right)=\pr^x(\tau_D > t)  \le C\left(1+t\right)^{\frac{d}{2\alpha}+\frac{d}{2}}e^{-\lambda_D t}.
\end{equation}
More generally, if $V$ is a nonnegative, locally bounded Borel function on $D$, there exists a constant $C>0$ such that for every $x\in D$,
\begin{equation}\label{MainEstimate-2}
P_{t}^{D,V}\ind_D\left(x\right)= \ex^x \biggl[ \exp \biggl( -\int_0^t V(X_s) ds \biggr) \ind_{\{\tau_D > t\}} \biggr] \leq   C\left(1+t\right)^{\frac{d}{2\alpha}+\frac{d}{2}}e^{-\lambda_{D,V} t}.
\end{equation}
\end{theorem}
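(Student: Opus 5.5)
We prove Theorem~\ref{thm:stable} by the decomposition described after Theorem~\ref{thm:general}, keeping every constant explicit.

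\textbf{Step 1: decomposition and reduction.} Fix $r_0=1$ and write $X_t=Y_t+Z_t$. Here $Y_t$ is the symmetric Lévy process with Lévy measure $\nu\ind_{\{|y|\le 1\}}$, and $Z_t$ is an independent compound Poisson process with jump intensity $\kappa=\nu(\{|y|>1\})<\infty$. Conditioning on the path of $Z$, we write
\[
P_t^{D,V}\ind_D(x)=\ex\bigl[u(t,x;Z)\bigr],
\]
where $u(t,x;Z)$ is the Feynman--Kac survival functional of $Y$ in the time-dependent domain $D-Z_s$ with potential $V(\cdot+Z_s)$. Between consecutive jumps of $Z$ the domain is a fixed translate $D-z$, and on it the killed Schrödinger semigroup of $Y$ is symmetric with bottom of spectrum
\[
\lambda^Y_{D-z,V(\cdot+z)}=\lambda^Y_{D,V}.
\]

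\textbf{Step 2: comparing spectral bottoms.} The Dirichlet forms satisfy
\[
\form^Y(f)=\form(f)-\int\!\!\int_{|y|>1}\tfrac12(f(x+y)-f(x))^2\nu(dy)\,dx=\form(f)-\kappa\|f\|_2^2+\langle f,\mu*f\rangle ,
\]
where $\mu=\nu\ind_{\{|y|>1\}}$. We would not compare $\lambda^Y$ with $\lambda$ directly. Instead we use the operator identity
\[
P_t^{D,V}=\text{(Dyson series)}\;\sum_n \int_{0<s_1<\dots<s_n<t} Q_{s_1}\,M\,Q_{s_2-s_1}\,M\cdots M\,Q_{t-s_n}\,ds ,
\]
where $Q_s=e^{-\kappa s}\times$(killed $Y$-semigroup) and $M=\ind_D(\mu*\cdot)\ind_D$. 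This is the $L^2$ perturbation expansion of $-\mathcal L^Y+\kappa-\mu*$, whose spectral bottom is exactly $\lambda_{D,V}$. The sum converges in $L^2$ to $P_t^{D,V}$ with $\|P_t^{D,V}\|_{2\to2}=e^{-\lambda_{D,V}t}$. The expansion lets us trade pointwise control for $L^2$ control.

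\textbf{Step 3: from $L^\infty$ to $L^2$.} This is the key estimate. We bound
\[
P_t\ind_D(x)\le \int_D p^D_{t}(x,y)\,dy .
\]
Using the semigroup property,
\[
P_{t}\ind_D(x)=\bigl\langle p^D_{1}(x,\cdot),\,P_{t-2}\,P_1\ind_D\bigr\rangle .
\]
The first factor has $L^2$ norm at most $\|p_1\|_2<\infty$, by the stable density bound. The obstacle is that $P_1\ind_D\notin L^2$ when $D$ is unbounded. We localize: split $\ind_D=\ind_{D\cap B(x,R)}+\ind_{D\setminus B(x,R)}$ with $R=R(t)$.
\begin{itemize}
\item For the near part, Cauchy--Schwarz gives
\[
e^{-\lambda_{D,V}(t-2)}\,\|p_1\|_2\,|B_R|^{1/2}\asymp R^{d/2}e^{-\lambda_{D,V} t}.
\]
\item For the far part, the process must travel distance $R$ in time $t$. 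Here the truncated process $Y$ has super-exponential tails, $\pr(\sup_{s\le t}|Y_s|>R)\le e^{-cR\log(R/t)}$. The large jumps contribute only through the Dyson terms.
\end{itemize}
We handle the far part by first choosing $R\approx t^{1/\alpha}$, which contributes the factor $(1+t)^{d/(2\alpha)}$ from the stable scaling of the displacement. The remaining factor $(1+t)^{d/2}$ absorbs the losses from summing over jump configurations of $Z$: a Poisson number, about $\kappa t$, of translated copies, each of which requires a ball of radius about $t$ for the far-part tail to become smaller than $e^{-\lambda t}$. Getting this far-part estimate uniform in $x$, with only polynomial loss, is the main obstacle. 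We would first try to bound the far part crudely by
\[
\pr^x\bigl(\sup_{s\le t}|X_s-x|>R\bigr)\,\sup_y P_{t/2}\ind_D(y),
\]
and close this by a bootstrap/iteration in $t$, in the spirit of \cite{Mariano-Wang-2025} and Vogt's argument. Finally, $V\ge0$ only helps in every step, so \eqref{MainEstimate-1} is the special case $V=0$ of \eqref{MainEstimate-2}.
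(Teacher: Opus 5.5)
Your near-part estimate is fine, but Step 3 has a genuine gap, and it sits exactly where you locate the main obstacle.

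\textbf{Why the far part fails.} You split according to the position of the full process, so the far part is $\ex^x[M_t;\,\lv X_t-x\rv\ge R]$. This is governed by the polynomial tail $\pr^x(\lv X_t-x\rv\ge R)\asymp tR^{-\alpha}$ (for $R\ge t^{1/\alpha}$) of the stable process, not by the super-exponential tail of $Y$: the large jumps are precisely what carry $X_t$ far away. Pushing this below $e^{-\lambda_{D,V}t}$ forces $R$ to be exponentially large in $t$, which destroys the near-part bound $R^{d/2}e^{-\lambda_{D,V}t}$.

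The bootstrap you suggest cannot repair this. With $u(t)=\sup_y P_t^{D,V}\ind_D(y)$ it gives $u(t)\le C R^{d/2}e^{-\lambda_{D,V}t}+C tR^{-\alpha}u(t/2)$. Optimizing over $R=e^{\rho t}$ yields at best $u(t)\approx e^{-\beta t}$ with $\beta=\lambda_{D,V}/(1+\frac{d}{4\alpha})$. That is exactly the non-sharp rate of \cite{Mariano-Wang-2025} which this theorem improves upon.

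Step 2 does not help either. The Dyson series just re-expands $P_t^{D,V}$, whose $L^2$ norm you already know, and gives no pointwise control of the far part. If you instead condition on $Z$ with the fixed level $r_0=1$, the relevant rate is the spectral bottom of $Y$. That is only bounded below by $\lambda_{D,V}-2\kappa$ and is in general strictly smaller than $\lambda_{D,V}$, so you lose a factor $e^{ct}$.

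\textbf{What is missing.} The paper combines three ideas.
\begin{enumerate}
\item Split according to the truncated component,
\[
P_t^{D,V}\ind_D(x)=\ex^x[\ind_{B(x,R)}(\tilde X_t)M_t]+\ex^x[\ind_{\R^d\setminus B(x,R)}(\tilde X_t)M_t].
\]
The second term is then trivially at most $\pr^x(\lv\tilde X_t-x\rv\ge R)$, and Lemma~\ref{lem:bennett} applies.
\item For the first term, condition on the path of $Z$ and view $M_t$ as a time-inhomogeneous functional of $\tilde X$ in the translated domains and potentials. Every piece has the same spectral bottom $\tilde\lambda\ge\lambda_{D,V}-2\nu(\R^d\setminus B(0,r))$. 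Corollary~\ref{cor:truncated} then gives the bound $\lV\tilde P_1\rV_{2,\infty}e^{-\tilde\lambda(t-1)}\lV\ind_{B(x,R)}\rV_2$.
\item Let the truncation level grow with $t$; this is what a fixed $r_0$ cannot deliver. For $r=t^{1/\alpha}$ the total spectral loss $2\nu(\R^d\setminus B(0,r))\,t=c_0$ is bounded. By stable scaling (Lemma~\ref{lem:HK-scaling}), $\lV\tilde P_1\rV_{2,\infty}$ is bounded uniformly in $r\ge1$.
\end{enumerate}
Chernoff--Bennett at jump scale $r=t^{1/\alpha}$ then needs $R=art=at^{1+1/\alpha}$ to make the far part at most $2d\,e^{-\lambda_{D,V}t}$. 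The factor $\lV\ind_{B(x,R)}\rV_2\asymp R^{d/2}$ produces exactly $t^{\frac{d}{2\alpha}+\frac d2}$. Your heuristic attribution of the two exponents is not tied to any actual estimate.
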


We point out that the exponent in the polynomial term in \eqref{MainEstimate-1} and \eqref{MainEstimate-2} is not sharp. One should be able to improve the bound in \eqref{MainEstimate-1} and \eqref{MainEstimate-2} to $C\left(1+t\right)^{\frac{d}{2\alpha}}e^{-\lambda_{D,V} t}$ with the improved exponent of $\frac{d}{2\alpha}$.   The reason is analogous to the diffusion case. If $\alpha=2$, then this would match Vogt's result \eqref{eq:Vogt}. And similar to Simon's result in \cite[Theorem 4.1]{Simon-1981b}, it was shown in \cite[Theorem 1.1]{TakedaWada2016} examples of potentials $V$  for the symmetric $\alpha$-stable process where $\left\Vert P^{D,V}_{t}\right\Vert _{\infty\to\infty}$
grows like $\frac{t}{\log t}$ if $d = 2 \alpha$. In fact, this conjecture can be proved on a metric measure space with the assistance of a large language model, ChatGPT Astra. 

\subsection{Toward the sharp polynomial exponent}\label{sec:AI}
The results of this paper were obtained by the authors without the use of AI assisted tools. 
To clearly distinguish the human contribution from the AI assisted work, we state here the sharper result obtained with AI assistance in the setting of $\R^d$ and defer its proof to the forthcoming paper \cite{MetricMeasureSurvival-2026}.

\begin{theorem}[\cite{MetricMeasureSurvival-2026}]
\label{thm:stable-sharp}
Suppose that $X_t$ is the isotropic $\alpha$-stable Lévy process in $\R^d$, with $\alpha\in(0,2]$, and let $M_t$ be the Feynman--Kac multiplicative functional
\[
M_t=
\begin{cases}
\displaystyle
\exp\left(-\int_0^t V(X_s)\,ds\right),& t<\tau_D,\\[1ex]
0,& t\ge \tau_D,
\end{cases}
\]
where $D\subseteq\R^d$ is a nonempty open set and
$V:D\to[0,\infty)$ is locally bounded and Borel measurable.

Then there exists a constant $C=C(d,\alpha)>0$, independent of
$D$ and $V$, such that, for every $t>0$ and $x\in D$,
\begin{equation}
\label{eq:stable-sharp}
P_t^M\ind(x)
=
\ex^x\left[
\exp\left(-\int_0^t V(X_s)\,ds\right)
\ind_{\{\tau_D>t\}}
\right]
\le
C(1+\lambda_{D,V} t)^{\frac{d}{2\alpha}}
e^{-\lambda_{D,V} t}.
\end{equation}
\end{theorem}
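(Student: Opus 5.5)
The plan is to prove the sharp bound in Theorem~\ref{thm:stable-sharp} by a Vogt-type argument: combine an ultracontractivity estimate for the killed Feynman--Kac semigroup with a weighted $L^2$ spectral bound, instead of the jump decomposition used for Theorem~\ref{thm:general}. Write $\lambda=\lambda_{D,V}$. Since $M_t\le 1$ pointwise, the semigroup $P_t^M$ is dominated by the free stable semigroup. Hence $\lV P_s^M\rV_{1\to\infty}\le \lV p_s\rV_\infty = c\,s^{-d/\alpha}$ and $\lV P_s^M\rV_{2\to\infty}\le c\,s^{-d/(2\alpha)}$, with constants depending only on $d,\alpha$. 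By the spectral theorem, $\lV P_s^M\rV_{2\to2}=e^{-\lambda s}$, and this uses only that $\lambda$ is the bottom of the spectrum of the form $\form+V$ on $D$. The difficulty is that $\ind$ is not in $L^2$. One therefore localizes: for fixed $x$, write $P_t^M\ind(x)=P_s^M P_{t-2s}^M P_s^M\ind(x)$, or better use the symmetric identity $P_t^M\ind(x)=\int P^M_{t}(x,y)\,dy$ and truncate in space at a scale $R(t)$.

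The key steps are as follows. First, split $\ind = \ind_{B(x,R)}+\ind_{B(x,R)^c}$. The near part gives
\[
P_t^M\ind_{B(x,R)}(x)\le \lV P^M_{1/\lambda}\rV_{2\to\infty}\, e^{-\lambda(t-1/\lambda)}\,|B(x,R)|^{1/2}\le C\lambda^{d/(2\alpha)}R^{d/2}e^{-\lambda t}.
\]
Second, the far part is handled as in Vogt/Ouhabaz by Davies--Gaffney type off-diagonal bounds using an exponential weight $e^{\rho\lv y-x\rv}$. For stable processes there are no Gaussian tails, so I would not use exponential weights. Instead I would use the decomposition already introduced in the paper: $X=Y+Z$ with jumps at most $r_0$ in $Y$. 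Perturbation theory then gives $\lV e^{\rho\phi}P^{Y}_te^{-\rho\phi}\rV_{2\to2}\le e^{(-\lambda'+c\rho^2 r_0^{2-\alpha}e^{\rho r_0})t}$ for Lipschitz $\phi$. The large jumps $Z$ form a Poisson process of rate $c\,r_0^{-\alpha}$, whose effect is controlled because the killed semigroup is sub-Markov. Third, choose the scale $r_0\sim\lambda^{-1/\alpha}$ (the natural length scale) and $R\sim \lambda^{-1/\alpha}(1+\lambda t)$. This should produce the factor $R^{d/2}\lambda^{d/(2\alpha)}\sim(1+\lambda t)^{d/2}$. To reach the improved exponent $d/(2\alpha)$, one then needs Vogt's refinement: instead of a ball of radius $\sim t$, use the $L^1\to L^\infty$ bound together with the Nash/Faber--Krahn-type inequality $\lV P_s^M\rV_{1\to2}\le c\,s^{-d/(2\alpha)}$, combined with the interpolation $\lV P_t^M\rV_{\infty\to\infty}\le \lV P_s\rV_{2\to\infty}\lV P^M_{t-2s}\rV_{2\to2}\lV\cdot\rV$ on a weighted space. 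Optimizing $s\sim t$ gives $(\lambda t)^{d/(2\alpha)}$.

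I expect the main obstacle to be that final step: controlling the contribution from far away (the $L^\infty\to L^2$ gap) without paying a spatial-volume factor. With Vogt's approach one needs a weighted $L^2$ estimate whose weight grows polynomially, roughly $(1+\lambda^{1/\alpha}\lv y-x\rv)^{-d-\epsilon}$, which is integrable. For jump processes, the commutator of the generator with such a weight must be shown to be form-bounded by $\epsilon\,\lambda$ times the identity. I would first try to establish $\form(w f,w^{-1}f)\ge \form(f)-\delta\lambda\lV f\rV_2^2$ for polynomial weights $w$ of slowly varying scale $\lambda^{-1/\alpha}T$, using the scaling of the stable kernel. Then I would choose $T\sim\lambda t$ so that the loss $e^{\delta\lambda t}$ remains bounded. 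Scaling invariance (replacing $D$ by $\lambda^{1/\alpha}D$) reduces everything to $\lambda=1$, which is why $C$ depends only on $d,\alpha$.
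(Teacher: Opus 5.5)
The paper gives no proof of this statement: it is quoted from \cite{MetricMeasureSurvival-2026}, and the decomposition method of Sections~\ref{sec:gen}--\ref{sec:stable} only reaches the exponent $\frac{d}{2\alpha}+\frac{d}{2}$. Your proposal therefore has to stand on its own, and as written it has two genuine gaps.

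\textbf{The decomposition step fails.} With $r_0\sim\lambda^{-1/\alpha}$, the large jumps arrive at rate $\nu(\R^d\setminus B(0,r_0))\asymp\lambda$. The comparison \eqref{eq:truncated:lambda} then only gives $\tilde\lambda\ge(1-c)\lambda$. You lose a fixed fraction of the exponential rate, i.e.\ a factor $e^{c\lambda t}$, not a polynomial factor. Sub-Markovianity of the killed semigroup controls its size, not its decay rate. The exponential weight $e^{\rho\phi}$ also cannot be carried across the jumps of $Z$, since $\int_{\lv y\rv>r_0}e^{\rho\lv y\rv}\,\nu(dy)=\infty$. This is exactly why the paper takes $r=t^{1/\alpha}$, where the loss is only $e^{c_0}$.

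Worse, the bound $C(1+\lambda t)^{d/2}e^{-\lambda t}$ you expect from this step is false for $\alpha<1$. Theorem~\ref{thm:stable-optimality} with $\alpha d/2<\beta<d/2$ gives $P_t^{\R^d,V}\ind(0)\ge c\,t^{\beta/\alpha}e^{-\lambda t}$ with $\beta/\alpha>d/2$. The radius $\lambda^{-1/\alpha}(1+\lambda t)$ is the Gaussian Davies--Gaffney scale. For the stable process the surviving mass lives on the scale $t^{1/\alpha}$, which is larger when $\alpha<1$. Relatedly, $d/(2\alpha)$ improves on $d/2$ only when $\alpha>1$.

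\textbf{The final step is unproved and its parameters do not close.} Polynomial weights, a commutator form bound and scaling to $\lambda=1$ are the right ingredients.
\begin{itemize}
\item \emph{Scale.} By scaling, a weight varying on the scale $L=\lambda^{-1/\alpha}T$ has commutator of size $\kappa L^{-\alpha}=\kappa\lambda T^{-\alpha}$. With your $T\sim\lambda t$ the loss is $\exp(\kappa(\lambda t)^{1-\alpha})$, which is unbounded for $\alpha<1$. For $\alpha>1$ the volume factor is $L^{d/2}\lambda^{d/(2\alpha)}=(\lambda t)^{d/2}$, which is too weak. Bounded loss together with the right volume factor forces $L\sim t^{1/\alpha}$, i.e.\ $T\sim(\lambda t)^{1/\alpha}$.
\item \emph{Decay exponent.} The exponent $d+\eps$ is too large. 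Since $p_s(y)\asymp s\lv y\rv^{-d-\alpha}$ for large $\lv y\rv$, already for $D=\R^d$, $V=0$ the conjugated semigroup $w^{-1}P_\tau w$ is bounded on $L^2$ only if $w^{-1}$ grows slower than $\lv y\rv^{d/2+\alpha}$. This excludes $d+\eps$ once $d\ge 2\alpha$.
\end{itemize}

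A version that does close is the following.
\begin{itemize}
\item Pair in $L^2$: $P_t^M\ind(x)\le\lV w\rV_2\lV w^{-1}p_t^M(x,\cdot)\rV_2$, with $w(y)=(1+\lv y-x\rv/L)^{-\beta}$ and $\frac{d}{2}<\beta<\frac{d}{2}+\alpha$.
\item Use the identity $\form(wu,w^{-1}u)-\form(u)=-\frac12\iint u(y)u(z)\frac{(w(y)-w(z))^2}{w(y)w(z)}\nu(y-z)\,dy\,dz$; the $V$-terms cancel.
\item A weighted Schur test with test function $(1+\lv y-x\rv/L)^{-d/2}$ then gives $\lV w^{-1}P_\tau^M w\rV_{2\to2}\le e^{-(\lambda-\kappa L^{-\alpha})\tau}$.
\item Also $\lV w^{-1}p_s(x,\cdot)\rV_2\le Cs^{-d/(2\alpha)}$ for $s\le L^\alpha$.
\item Write $p_t^M(x,\cdot)=P^M_{t-s}p_s^M(x,\cdot)$ and take $L=t^{1/\alpha}$, $s=1/\lambda$ (when $\lambda t\ge1$). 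This gives $C(\lambda t)^{d/(2\alpha)}e^{-\lambda t}$.
\end{itemize}
The smoothing time must be $1/\lambda$, not of order $t$, or you pay $e^{\lambda s}$. The $t$-dependence enters only through $L$, and the decomposition $X=Y+Z$ is not needed.
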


The following result shows the result above is actually sharp. 

\begin{theorem}[\cite{MetricMeasureSurvival-2026}]
\label{thm:stable-optimality}

Let $d\ge1$, $0<\alpha\le2$, and $0<\beta<d/2$. Set
\[
 h(x)=(1+|x|^2)^{-\beta/2},\qquad
 W(x)=-\frac{(-\Delta)^{\alpha/2}h(x)}{h(x)}.
\]
Then $W$ is bounded and continuous, with $W(x)\to0$ as $|x|\to\infty$.
For any $\lambda>\|W_-\|_\infty$, the bounded potential
$V=\lambda+W$ is nonnegative, and $(-\Delta)^{\alpha/2}+V$ on $\R^d$ satisfies
$\lambda_{\R^d,V}=\lambda$. Moreover, for some $c>0$,
\begin{equation}
\label{eq:stable-lower-example}
P_t^{\R^d,V}\ind(0)
\ge
c\,e^{-\lambda_{\R^d,V} t}t^{\beta/\alpha},
\qquad t\ge1.
\end{equation}

Consequently, there is no exponent
\[
\gamma<\frac{d}{2\alpha}
\]
for which an estimate of the form
\[
P_t^{D,V}\ind(x)
\le
C_{D,V}(1+t)^\gamma e^{-\lambda_{D,V} t}
\]
holds for every nonnegative bounded potential $V$ and domain $D\subset \R^d$, even when the
constant $C_{D,V}$ is allowed to depend on the potential.
Therefore the exponent $d/(2\alpha)$ in
Theorem~\ref{thm:stable-sharp} is optimal for general potential
killing.
\end{theorem}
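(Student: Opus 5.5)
The plan has three parts: compute $(-\Delta)^{\alpha/2}h$ explicitly enough to control $W$, identify $\lambda_{\R^d,V}$ through the ground state representation, and then prove the lower bound \eqref{eq:stable-lower-example} by a Doob $h$-transform together with an on-diagonal heat kernel estimate for the transformed process.

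\emph{Step 1: properties of $W$.} The function $h$ is smooth and positive, and all its derivatives are bounded and decay like $|x|^{-\beta-k}$, so $(-\Delta)^{\alpha/2}h$ is defined pointwise by the singular integral and is continuous. Since $h(x)\sim|x|^{-\beta}$ at infinity and $(-\Delta)^{\alpha/2}|x|^{-\beta}=c_{\beta}|x|^{-\beta-\alpha}$ for $0<\beta<d$, I will split the integral into the region $|y|<|x|/2$ (Taylor expansion to second order) and its complement (direct tail bounds). This should give
\begin{equation*}
 \bigl|(-\Delta)^{\alpha/2}h(x)\bigr|\le C(1+|x|)^{-\beta-\alpha}.
\end{equation*}
For $\alpha=2$ this is a direct computation. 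It follows that $|W(x)|\le C(1+|x|)^{-\alpha}$, so $W$ is bounded, continuous and tends to $0$. Hence $V=\lambda+W\ge0$ whenever $\lambda>\|W_-\|_\infty$.

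\emph{Step 2: $\lambda_{\R^d,V}=\lambda$.} By construction $(-\Delta)^{\alpha/2}h+Wh=0$. For $f\in C_c^\infty$ I will write $f=gh$ and use the ground state representation
\begin{equation*}
 \form(f)+\int W f^2=\frac12\iint (g(x)-g(y))^2h(x)h(y)\nu(dy-x)\,dx\ \ge 0 ,
\end{equation*}
which gives $\lambda_{\R^d,V}\ge\lambda$. For the reverse inequality I will use the test functions $f_R=h\,\phi(\cdot/R)$ with a fixed cutoff $\phi$. Since $2\beta<d$, $h\notin L^2$, so $\|f_R\|_2^2\asymp R^{d-2\beta}\to\infty$. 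Meanwhile the ground-state form of $\phi(\cdot/R)$ is $O(R^{d-2\beta-\alpha})$, by scaling with the weight $h(x)h(y)$. Hence the Rayleigh quotient of $f_R$ tends to $\lambda$.

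\emph{Step 3: lower bound via the $h$-transform.} By Dynkin's formula ($h$ is in the domain of the generator and $W$ is bounded), the process $e^{-\int_0^tW(X_s)ds}h(X_t)/h(X_0)$ is a martingale. Define $\tilde{\pr}^0$ through it. Then
\begin{equation*}
 P_t^{\R^d,V}\ind(0)=e^{-\lambda t}h(0)\,\tilde{\ex}^0\bigl[h(X_t)^{-1}\bigr]=e^{-\lambda t}\,\tilde{\ex}^0\bigl[(1+|X_t|^2)^{\beta/2}\bigr].
\end{equation*}
It therefore suffices to prove $\tilde{\pr}^0(|X_t|\ge\eps t^{1/\alpha})\ge\frac12$ for some small $\eps>0$ and all $t\ge1$. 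The transformed process is symmetric with respect to $\mu=h^2dx$, and its Dirichlet form is the weighted form displayed above. The measure $\mu$ is doubling, with $\mu(B(0,r))\asymp r^{d-2\beta}$ for $r\ge1$.

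The \textbf{main obstacle} is the on-diagonal bound
\begin{equation*}
 \tilde p_t(0,y)\le Ct^{-(d-2\beta)/\alpha}\quad\text{with respect to }\mu .
\end{equation*}
I would first try to derive it from a weighted Nash inequality for this form. That inequality should follow from the fractional Sobolev (or Hardy) inequality transplanted by the ground state representation, together with scaling of the weight; alternatively one could invoke known heat kernel bounds for stable processes with Hardy-type potentials. Given the bound, we get
\begin{equation*}
 \tilde{\pr}^0(|X_t|<\eps t^{1/\alpha})\le Ct^{-(d-2\beta)/\alpha}\,\mu\bigl(B(0,\eps t^{1/\alpha})\bigr)\le C\eps^{d-2\beta}\le\tfrac12
\end{equation*}
for small $\eps$, which yields \eqref{eq:stable-lower-example}.

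\emph{Consequence.} Suppose an estimate with exponent $\gamma<\frac{d}{2\alpha}$ held. Choose $\beta\in(\alpha\gamma,d/2)$ and take $D=\R^d$. Then \eqref{eq:stable-lower-example} gives $t^{\beta/\alpha}\le C_{D,V}(1+t)^{\gamma}/c$ for all $t\ge1$, which is impossible as $t\to\infty$ because $\beta/\alpha>\gamma$. Hence the exponent $d/(2\alpha)$ in Theorem~\ref{thm:stable-sharp} is optimal.
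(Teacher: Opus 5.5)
\paragraph{No proof in the paper; Steps 1, 2 and the deduction are sound.}
The paper does not prove this theorem. It is quoted from \cite{MetricMeasureSurvival-2026}, with only a pointer to \cite{KaletaLorinczi2020} for the construction and to \cite{DydaKuznetsovKwasnicki2017} for the explicit formula for $(-\Delta)^{\alpha/2}(1+|x|^2)^{-\beta/2}$. So your argument has to stand on its own.
\begin{itemize}
\item Your splitting estimate $|(-\Delta)^{\alpha/2}h(x)|\le C(1+|x|)^{-\beta-\alpha}$ is a legitimate substitute for the hypergeometric formula.
\item The ground state representation gives $\lambda_{\R^d,V}\ge\lambda$, and $h\phi(\cdot/R)$ gives the reverse inequality.
\item The identity $P_t^{\R^d,V}\ind(0)=e^{-\lambda t}\,\tilde\ex^0[h(X_t)^{-1}]$ correctly reduces \eqref{eq:stable-lower-example} to showing $\tilde\pr^0(|X_t|\ge\eps t^{1/\alpha})\ge\frac12$.
\item The final deduction from \eqref{eq:stable-lower-example} is correct.
\end{itemize}
By Cauchy--Schwarz, $\tilde\pr^0(X_t\in B)\le(\tilde p_{2t}(0,0)\,\mu(B))^{1/2}$, so a bound on $\tilde p_{2t}(0,0)$ alone would suffice.

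\paragraph{The gap: the on-diagonal bound.}
The missing piece is the large-time bound $\tilde p_t\le Ct^{-(d-2\beta)/\alpha}$. It is the whole content of the lower bound, you leave it unproved, and the route you suggest cannot work where it is needed.

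Take $\beta>(d-\alpha)/2$. This range contains every $\beta$ close to $d/2$, which is exactly the range your optimality argument uses. Your own test functions $f_R=h\phi(\cdot/R)$ satisfy $\form(f_R)+\int Wf_R^2=O(R^{d-2\beta-\alpha})\to0$, while $f_R=h$ on $B(0,1)$. Hence $(-\Delta)^{\alpha/2}+W$ is critical. It obeys no Sobolev inequality, and not even a Hardy inequality with a nonzero compactly supported weight. So there is nothing for the ground state representation to transplant into a Nash inequality for the weighted form. Such a Nash inequality could in any case hold only at large scales, since $\tilde p_t(0,0)\asymp t^{-d/\alpha}$ as $t\to0$.

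Quoting heat kernel bounds for Hardy potentials does not close the gap either:
\begin{itemize}
\item Those results treat exactly $\kappa|x|^{-\alpha}$, with ground state $|x|^{-\delta}$ and $\delta\le(d-\alpha)/2$. Your $W$ is only asymptotically of this form, and for $\beta>(d-\alpha)/2$ your $h$ follows the other root.
\item For jumps between $B(0,1)$ and $|y|\gg1$, the jump kernel $\nu(x-y)/(h(x)h(y))$ of the $h$-process with respect to $\mu\otimes\mu$ is smaller by a factor of order $|y|^{-\beta}$ than the stable-like kernel $1/(\mu(B(x,|x-y|))\,|x-y|^{\alpha})$. So the general stability theory for stable-like kernels on doubling spaces does not apply off the shelf.
\end{itemize}

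\paragraph{What is still needed.}
For $\alpha=2$ the bound is classical: the $h$-process is generated by $h^{-2}\nabla\cdot(h^2\nabla)$, and $(1+|x|^2)^{-\beta}$ is an $A_2$ weight. For $\alpha<2$ you need a genuine proof. One possible route is a large-scale Faber--Krahn or Nash inequality for the weighted form, obtained from the doubling property of $\mu$ together with Poincar\'e inequalities on balls of radius at least $1$, rather than from the Sobolev inequality on $\R^d$.
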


We note that the idea of the construction of this potential goes back to the work of Kaleta and L\H{o}rinczi \cite[(1.1)]{KaletaLorinczi2020} and the computations in \cite[Corollary 1]{DydaKuznetsovKwasnicki2017}.

\subsection{Organization of the paper:}

We organize the paper as follows. In Section \ref{sec:prelim} we give the necessary preliminaries for our problem. In Section \ref{sec:gen} we prove Theorem \ref{thm:general} for general Lévy processes. In Section \ref{sec:stable} we prove the sharp upper bounds for the isotropic $\alpha$-stable process in Theorem \ref{thm:stable}. In Section \ref{sec:examples} we recall some examples. 

%
%

\section{Preliminaries}\label{sec:prelim}


\subsection{Lévy processes}

Throughout this paper, $X_t$ is a symmetric Lévy process in $\R^d$. This process is described by the covariance matrix $a = (a_{ij} : i, j = 1, 2, \ldots, d)$ of the Brownian component and the symmetric Lévy measure $\nu$ on $\R^d \setminus \{0\}$, which describes the intensity of jumps of $X_t$. Recall that $a$ can be an arbitrary nonnegative definite matrix, and $\nu$ can be an arbitrary nonnegative measure on $\R^d \setminus \{0\}$ such that
\formula{
 \int_{\R^d \setminus \{0\}} \min\{1, \lv x \rv^2\} \nu(dx) & < \infty \qquad \text{and} \qquad \text{$\nu(-A) = \nu(A)$ for every Borel $A \subseteq \R^d \setminus \{0\}$.}
}
As it is customary in the theory of Markov processes, we consider the process $X_t$ started at an arbitrary point $x \in \R^d$, and we denote by $\pr^x$ and $\ex^x$ the corresponding probability and expectation.

The transition kernel of $X_t$ is given by
\formula{
 P_t(x, A) & = \pr^x \bigl[ X_t \in A \bigr] .
}
We write $P_t(A)$ instead of $P_t(0, A)$, and since $X_t$ is space-homogeneous, we have $P_t(x, A) = P_t(A - x)$. Whenever the integral makes sense, we write
\formula{
 P_t f(x) & = \int_{\R^d} f(y) P_t(x, dy) = \ex^x \bigl[ f(X_t) \bigr] .
}

When $\alpha \in (0, 2)$, we say that $X_t$ is the isotropic $\alpha$-stable Lévy process in $\R^d$ if
\formula{
 \nu(x) & = c_{d, \alpha} |x|^{-d - \alpha} ,
}
where $c_{d, \alpha} = 2^\alpha \pi^{-d / 2} \Gamma(\tfrac{d + \alpha}{2}) \lv \Gamma(-\tfrac{\alpha}{2}) \rv^{-1}$, and $a_{ij} = 0$ for $i, j = 1, 2, \ldots, d$.


\subsection{Killed processes}

Let $D$ be a nonempty open subset of $\R^d$. By
\formula{
 \tau_D & = \inf\{t \ge 0 : X_t \notin D\}
}
we denote the first exit time of $X_t$ from $D$. We introduce the killed process
\formula{
 X_t^D & = \begin{cases} X_t & \text{if $t < \tau_D$;} \\ \partial & \text{if $t \ge \tau_D$,} \end{cases}
}
where $\partial$ is an auxiliary cemetery state. The corresponding transition kernel is defined by
\formula{
 P_t^D(x, A) & = \pr^x \bigl[ X_t^D \in A \bigr] = \pr^x \bigl[ X_t \in A ; \, t < \tau_D \bigr] ,
}
and we write
\formula{
 P_t^D f(x) & = \ex^x \bigl[ f(X_t^D) \bigr] = \ex^x \bigl[ f(X_t) \ind_{\{\tau_D > t\}} \bigr]
}
whenever the integral makes sense. Here we adopt the convention that $f(\partial) = 0$ for every function $f$ on $\R^d$. Note that if $x \notin D$, then $X_t^D = \partial$ for all $t \ge 0$ almost surely with respect to $\pr^x$, and $P_t^D f(x) = 0$ for all $t \ge 0$.


\subsection{Multiplicative functionals}

Let $D$ be a nonempty open subset of $\R^d$, and $V$ a locally bounded nonnegative function on $D$. We consider the Feynman--Kac functional $M_t$, defined by
\formula[eq:feynman:kac]{
 M_t & = \begin{cases} \displaystyle \exp\biggl(-\int_0^t V(X_s) ds\biggr) & \text{if $t < \tau_D$;} \\ 0 & \text{if $t \ge \tau_D$.} \end{cases}
}
Intuitively, the value of $M_t$ describes either the diminishing mass of a particle, or the (conditional) probability of that the particle is alive at a given time.

More generally, we consider an arbitrary nonincreasing multiplicative functional $M_t$ which is continuous execpt a possible jumps to $0$, and which starts from either $1$ or $0$. In this case we denote by $D$ the set of permanent points of $M_t$: for $x \in D$, we have $M_t > 0$ for sufficiently small $t > 0$, while for $x \notin D$, we have $M_t = 0$ for all $t \ge 0$ almost surely with respect to $\pr^x$. The functional $M_t$ admits a description similar to~\eqref{eq:feynman:kac}, but with $V(x)$ replaced by a suitable nonnegative measure $V(dx)$, namely, the Revuz measure of $-\log M_t$. We refer to Chapter~III in~\cite{Blumenthal-Getoor} and Sections~5.1 and~6.1 in~\cite{Fukushima-book-ed1-1994} for further discussion.

We remark that the pair $(X_t, M_t)$ corresponds to a suitable part of the process $X_t$, and $M_t$ describes the gradual killing of $X_t$, but we do not need this notion. Our key object of interest is, however, the corresponding transition kernel, defined by
\formula{
 P_t^M(x, A) & = \ex^x \bigl[ M_t \ind_{\{X_t \in A\}} \bigr] .
}
We write
\formula{
 P_t^M f(x) & = \ex^x \bigl[ f(X_t) M_t \bigr]
}
whenever the integral makes sense. In particular, $P_t^M f(x) = 0$ when $x \notin D$ and $t \ge 0$.

Observe that choosing $V$ to be constant zero in $D$, we obtain
\formula{
 M_t & = \begin{cases} 1 & \text{if $t < \tau_D$;} \\ 0 & \text{if $t \ge \tau_D$,} \end{cases}
}
and in this case $P_t^M = P_t^D$.


\subsection{Dirichlet forms}

We denote the Dirichlet form of $X_t$ by
\formula{
 \form(f) & = \frac{1}{2} \int_{\R^d} \sum_{i, j = 1}^d a_{ij} \frac{\partial f}{\partial x_i}(x) \frac{\partial f}{\partial x_j}(x) dx + \frac{1}{2} \int_{\R^d} \int_{\R^d \setminus \{0\}} (f(x + y) - f(x))^2 \nu(dy) dx ,
}
defined initially for $f \in C_c^\infty(\R^d)$. We also let
\formula[eq:form]{
 \form_M(f) & = \form(f) + \int_{\R^d} \lv f(x) \rv^2 V(dx) ,
}
where $V$ is the Revuz measure of $-\log M_t$. When $M_t$ is the Feynman--Kac functional given by~\eqref{eq:feynman:kac}, we set $V(dx) = V(x) dx$.

The domain of $\form$, denoted $\dom(\form)$, is the completion of $C_c^\infty(\R^d)$ with respect to the norm $(\lV f \rV_2^2 + \form(f))^{1 / 2}$. Clearly, $\dom(\form)$ is a subset of $L^2(\R^d)$. The Dirichlet form $\form_D$ of the killed process $X_t^D$ is the restriction of $\form$ to the completion of $C_c^\infty(D)$ with respect to the same norm. Finally, the domain of $\form_M$, denoted $\dom(\form_M)$, is the completion of $C_c^\infty(D)$ with respect to the norm $(\lV f \rV_2^2 + \form_M(f))^{1 / 2}$. Alternatively, $f \in \dom(\form_M)$ if and only if $f \in \dom(\form_D)$ and the integral in~\eqref{eq:form} is finite (with an appropriate quasi-continuous choice of the representative of $f$ in case the measure $V$ is not absolutely continuous; see Section~6.1 in~\cite{Fukushima-book-ed1-1994}).

The bottom of the spectrum of $X_t^D$ is given by the Rayleigh--Ritz variational formula
\formula{
 \lambda_D & = \inf \biggl\{ \form(f) : f \in \dom(\form_D), \lV f \rV_2 = 1 \biggr\} .
}
Similarly, the bottom of the spectrum of $(X_t, M_t)$ is
\formula{
 \lambda_M & = \inf \biggl\{ \form_M(f) : f \in \dom(\form_M) , \lV f \rV_2 = 1 \biggr\} .
}
This was denoted by $\lambda_{D, V}$ in the Introduction, but, for simplicity, from now on we will use the shorter symbol $\lambda_M$.

For every $f \in L^2(D)$, we have
\formula{
 \lV P_t^M f \rV_2 & \le \exp(-\lambda_M t) \lV f \rV_2 .
}
When $P_t^M$ is compact (so for example when $D$ is bounded), then there is a complete orthonormal system of eigenfunctions of $P_t^M$ (the same one for all $t \ge 0$), and the corresponding eigenfunctions $\ph_n$ satisfy $P_t^M \ph_n(x) = \exp(-\lambda_n t) \ph_n(x)$ for $x \in D$ and $t \ge 0$, where $\lambda_M = \lambda_1 \le \lambda_2 \le \cdots$.


\subsection{Assumptions}

Throughout this work, we assume that $(X_t, M_t)$ is irreducible, in the sense that
\formula[eq:irreducible]{
 P_t^M(x, A) & > 0 \qquad \text{when $x \in D$, $t > 0$ and $A \subset D$ has nonempty interior.}
}
We additionally assume eventual ultracontractivity of the transition operators of $X_t$:
\formula[eq:ultracontractive]{
 & \text{$P_t f$ is bounded when $f \in L^2(D)$ and $t$ is large enough,}
}
and we denote
\formula{
 \lV P_t \rV_{2, \infty} & = \sup_f \frac{\lV P_t f \rV_\infty}{\lV f \rV_2} ,
}
with the supremum over all nonzero $f \in L^2(D)$.


\subsection{Notation}

We occasionally write $X(t)$, $M(t)$, $\tau(D)$, $\lambda(D)$ or $\lambda(M)$ to improve clarity or avoid nested indices. We also denote $s \wedge t = \min\{s, t\}$.

%
%

\section{General Lévy processes}\label{sec:gen}

Our goal in this section is to prove Theorem~\ref{thm:general}: for every $x \in D$,
\formula[eq:goal]{
 \lim_{t \to \infty} \frac{\log P_t^M \ind(x)}{t} & = -\lambda_M .
}
We begin with a relatively simple proof of the lower bound for the limit in Section~\ref{sec:lower}. The proof of the upper bound is much more involved. In Sections~\ref{sec:bennett} and~\ref{sec:varying}, we prove two auxiliary results: a simple large deviations estimate and an $L^2$ bound for time-inhomogeneous multiplicative functionals. The actual proof begins in Section~\ref{sec:decomposition} with a decomposition of $X_t$ into two independent Lévy processes $\tilde X_t$ and $Z_t$, which gather small and large jumps of $X_t$, respectively. With the path of $Z_t$ fixed, we view $M_t$ as a time-inhomogeneous multiplicative functional of $\tilde X_t$, and use our auxiliary estimate to get one ingredient of the upper bound in~\eqref{eq:goal}. The other ingredient is found in Section~\ref{sec:upper} using the large deviations estimate.


\subsection{Lower bound}
\label{sec:lower}

As mentioned above, the lower bound in~\eqref{eq:goal} is easy.

\begin{lemma}
\label{lem:lower}
If~\eqref{eq:irreducible} and~\eqref{eq:ultracontractive} hold true, then, for every $x \in D$,
\formula[eq:lower]{
 \liminf_{t \to \infty} \frac{\log P_t^M \ind(x)}{t} & \ge -\lambda_M .
}
\end{lemma}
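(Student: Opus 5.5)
The plan is to approximate $\lambda_M$ from above by bottoms of spectra on bounded subdomains, where the spectral expansion is available, and then to connect the starting point $x$ to the corresponding ground state using irreducibility.

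First, fix $\eps > 0$. By the definition of $\lambda_M$ as an infimum over $C_c^\infty(D)$ (which is dense in $\dom(\form_M)$), there is $f \in C_c^\infty(D)$ with $\lV f \rV_2 = 1$ and $\form_M(f) < \lambda_M + \eps$. Choose a bounded open set $U$ with $\overline{U} \subset D$ containing the support of $f$; we may take $U$ connected, or at least work on the component structure later. Let $M^U_t = M_t \ind_{\{t < \tau_U\}}$. Then $\lambda(M^U) \le \form_M(f) < \lambda_M + \eps$, since $f$ is admissible for $U$. Because $U$ is bounded, $V$ is bounded on $U$, and $P_t$ is eventually ultracontractive by~\eqref{eq:ultracontractive}, the operators $P^{M^U}_t$ are Hilbert--Schmidt for large $t$. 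Indeed, $P_t^{M^U} \le P_t$ pointwise and $P_t$ maps $L^2$ into $L^\infty$, so the kernel is in $L^2(U \times U)$, using $|U| < \infty$. There is therefore a ground state $\ph \ge 0$, $\ph \ne 0$, with $P_t^{M^U}\ph = e^{-\lambda(M^U) t}\ph$. Moreover, $\ph$ is bounded, since $\ph = e^{\lambda t} P_t^{M^U}\ph$ and $P_t$ is $L^2 \to L^\infty$ bounded for large $t$. Nonnegativity follows by replacing an eigenfunction with its absolute value, which does not increase the form. Boundedness of $\ph$ gives
\[
P_t^M \ind(y) \ge P_t^{M^U}\ind(y) \ge \frac{P_t^{M^U}\ph(y)}{\lV \ph \rV_\infty} = \frac{e^{-\lambda(M^U) t}\ph(y)}{\lV \ph \rV_\infty}.
\]

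Second, we transfer this from points $y$ where $\ph(y) > 0$ to an arbitrary $x \in D$ via the Markov property. Since $\ph \ge 0$ is nonzero, there is a level $\delta > 0$ such that $A = \{\ph > \delta\}$ has positive measure. Here I must be careful: irreducibility~\eqref{eq:irreducible} concerns sets with nonempty interior, so I would instead use $\ph = e^{\lambda s}P_s^{M^U}\ph$ to argue regularity. The cleaner route is to avoid pointwise information on $\ph$ altogether. For fixed $s$, write
\[
P^M_{s+t}\ind(x) \ge \int P^M_s(x,dy)\, P^{M^U}_t \ind(y) \ge \frac{e^{-\lambda(M^U)t}}{\lV\ph\rV_\infty}\int_U P^M_s(x,dy)\,\ph(y).
\]
It then suffices that $P_s^M(x,\cdot)$ charges $\{\ph>0\}$ for some $s$. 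I expect this step to be the main obstacle. I would handle it by choosing $s = s_1 + s_2$ with $s_2$ large and writing $\int P_s^M(x,dy)\ph(y) \ge e^{-\lambda s_2}\int P^M_{s_1}(x,dz)\,\ph(z)$ using the eigen-relation in reverse. Alternatively, and more simply, since $\ph \in L^2$ with positive integral on $U$, I would pick an open ball $B \subset U$ where $\int_B \ph > 0$. Taking an $L^2$-approximation, the function $\ph = e^{\lambda s_2}P^{M^U}_{s_2}\ph$ is bounded below on a neighbourhood of a point by lower semicontinuity-type arguments. If this fails, I would reduce to showing that $P_{s_2}^{M^U}\ind_B$ is positive on an open set, via irreducibility applied in $U$ with a connected choice of $U$, together with symmetry of the kernel.

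Finally, once $\int P_s^M(x,dy)\ph(y) = c > 0$, we obtain $P^M_{s+t}\ind(x) \ge c' e^{-(\lambda_M+\eps)t}$. Taking logarithms, dividing by $t$ and letting $t\to\infty$ gives $\liminf \ge -\lambda_M - \eps$. Since $\eps$ is arbitrary, this yields~\eqref{eq:lower}.
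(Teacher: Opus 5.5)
Up to the last step your argument is sound, and it is a legitimate variant of the paper's. The paper adds a confining potential $\tilde V$ vanishing on the support of $f$, quotes a compactness result, and works with a ground state of $(X_t,\tilde M_t)$ defined on all of $D$. You instead truncate to a bounded $U$ with $\overline U\subset D$. You get compactness from the elementary Hilbert--Schmidt bound $p^{M^U}_t(y,z)\le q_t(z-y)$ on $U\times U$ (with $q_t$ the density of $X_t$). Because your $\ph$ vanishes off $U$, you correctly add the Markov step
\[
P^M_{s+t}\ind(x)\ge \frac{e^{-\lambda(M^U)t}}{\lV\ph\rV_\infty}\,P^M_s\ph(x).
\]

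The genuine gap is that you never show $P^M_s\ph(x)>0$ for some $s$, and none of your three remedies does so.
\begin{itemize}
\item The first, $P^M_s\ph(x)\ge e^{-\lambda(M^U)s_2}P^M_{s_1}\ph(x)$, merely replaces $s$ by $s_1$, so it is circular.
\item The second needs lower semicontinuity of $P^{M^U}_{s_2}\ph$, i.e.\ a strong Feller-type property of the killed Feynman--Kac semigroup. This is not among the hypotheses: a density of $X_t$ is assumed only for large $t$, and $V$ is merely locally bounded and Borel. You give no argument for it.
\item The third uses irreducibility of the process killed upon leaving $U$, which is not assumed; only $(X_t,M_t)$ on $D$ is. Even granting it, irreducibility in the sense of \eqref{eq:irreducible} only gives $P^{M^U}_s(y,A)>0$ for open $A$, not for the merely measurable set $\{\ph>\delta\}$. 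So it produces no open set on which $\ph$ is bounded below.
\end{itemize}

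The paper settles this point in one line. For its global ground state, $P^M_t\ph(x)>0$ by \eqref{eq:irreducible}. Hence $P^{\tilde M}_t\ph(x)>0$, because $\tilde M_t>0$ whenever $M_t>0$, and so $\ph(x)=e^{\lambda_{\tilde M}t}P^{\tilde M}_t\ph(x)>0$. Your reduction leaves you in exactly the same position, so the same appeal to irreducibility, applied to $P^M_s\ph(x)$, would finish your proof at the paper's level of detail.

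Your concern is nevertheless legitimate. Read literally, \eqref{eq:irreducible} only concerns sets with nonempty interior, so this appeal tacitly needs either a measure-theoretic form of irreducibility or an extra argument. One way to supply it uses the synchronous coupling $X^{y+h}=X^y+h$:
\begin{itemize}
\item Consider paths that stay in a fixed compact subset of $D$, where $V$ is bounded. On these, the killed law from $y+h$ at time $t$ dominates a constant multiple of the $h$-translate of the killed law from $y$ restricted to such paths.
\item That restricted law is absolutely continuous for large $t$. By $L^1$-continuity of translations, for small $\lvert h\rvert$ the killed law from $y+h$ charges every set that carries the killed law from $y$.
\item Combined with symmetry and \eqref{eq:irreducible}, this excludes $P^M_s\ph(x)=0$ for all large $s$.
\end{itemize}
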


\begin{proof}
Fix $x \in D$ and $\eps > 0$. By the definition of $\form_M$, there is a nonzero function $f \in C_c(D) \cap \dom(\form_M)$ such that
\formula{
 \form_M(f) & \le (\lambda_M + \eps) \lV f \rV_2^2 .
}
Let $\tilde V$ be an arbitrary continuous function on $\R^d$ such that $f(x) \tilde V(x) = 0$ for all $x \in \R^d$, and $\tilde V(x) \to \infty$ as $\lv x \rv \to \infty$. Define the mutliplicative functional
\formula{
 \tilde M_t & = \exp \biggl( -\int_0^t \tilde V(X_s) ds \biggr) M_t .
}
The transition semigroup $P_t^{\tilde M}$ is compact on $L^2(D)$; see~\cite{Lenz-Stollmann-Wingert}. Denote by $\ph$ the ground state of $(X_t, \tilde M_t)$, that is, a nonnegative normalised eigenfunction of $P_t^{\tilde M}$ corresponding to the eigenvalue $\exp(-\lambda_{\tilde M} t)$.

Clearly,
\formula{
 \lambda_{\tilde M} & \le \frac{\form_{\tilde M}(f)}{\lV f \rV_2^2} = \frac{\form_M(f)}{\lV f \rV_2^2} \le \lambda_M + \eps ,
}
and so $\lambda_{\tilde M} \le \lambda_M + \eps$. On the other hand, since $M_t \ge \tilde M_t$, we have
\formula{
 P_t^M \ind(x) & \ge P_t^{\tilde M} \ind(x) \ge \frac{P_t^{\tilde M} \ph(x)}{\sup \ph} = \frac{\exp(-\lambda_{\tilde M} t) \ph(x)}{\sup \ph} ,
}
and hence
\formula{
 \liminf_{t \to \infty} \frac{\log P_t^M \ind(x)}{t} & \ge -\lambda_{\tilde M} + \liminf_{t \to \infty} \frac{\log \ph(x) - \log \sup \ph}{t} .
}
By~\eqref{eq:irreducible}, $P_t^M \ph(x) > 0$, and so also $P_t^{\tilde M} \ph(x) > 0$. Thus, $\ph(x) = \exp(\lambda_{\tilde M} t) P_t^{\tilde M} \ph(x) > 0$. By~\eqref{eq:ultracontractive}, $\ph = \exp(\lambda_{\tilde M} t) P_t^{\tilde M} \ph(x) \le \exp(\lambda_{\tilde M} t) P_t^M \ph$ is bounded for $t$ large enough. Hence,
\formula{
 \liminf_{t \to \infty} \frac{\log P_t^M \ind(x)}{t} & \ge -\lambda_{\tilde M} \ge -\lambda_M - \eps .
}
Since $\eps > 0$ was arbitrary, \eqref{eq:lower} follows.
\end{proof}


\subsection{Chernoff--Bennett bound for truncated processes}
\label{sec:bennett}

In this section we fix $r > 0$ and we assume that the Lévy measure $\nu$ is concentrated on $B(0, r) \setminus \{0\}$. In this case $X_t$ has only jumps of magnitude less than $r$ --- it is a finite range Lévy process. Using standard techniques, we prove a crude variant of Chernoff--Bennett inequality (cf.\@ Theorem~2.9 in~\cite{blm}).

The Laplace exponent of $X_t$ is a function $\psi$ satisfying
\formula{
 \ex^x \bigl[ \exp(\xi \cdot X_t) \bigr] & = \exp(\xi \cdot x + t \psi(\xi)) .
} 
It is given by the Lévy--Khintchine formula
\formula{
 \psi(\xi) & = \frac{1}{2} \sum_{i, j = 1}^d a_{ij} \xi_i \xi_j + \int_{B(0, r) \setminus \{0\}} (e^{\xi \cdot x} - 1 - \xi \cdot x) \nu(dx) \\
 & = \frac{1}{2} \sum_{i, j = 1}^d a_{ij} \xi_i \xi_j + \int_{B(0, r) \setminus \{0\}} (\cosh(\xi \cdot x) - 1) \nu(dx) .
}
Under our assumption, $\psi$ is defined on all of $\R^d$.

It is easy to check that for $x \in B(0, r) \setminus \{0\}$, we have
\formula{
 r^2 (\cosh(\xi \cdot x) - 1) \le \lv x \rv^2 (\cosh(r \lv \xi \rv) - 1) .
}
Additionally, $\tfrac{1}{2} r^2 \lv \xi \rv^2 \le \cosh(r \lv \xi \rv) - 1$. Therefore,
\formula[eq:laplace:bound]{
 \psi(\xi) & \le \biggl(\frac{\lV a \rV}{r^2} + \frac{1}{r^2} \int_{B(0, r) \setminus \{0\}} \lv x \rv^2 \nu(dx) \biggr) (\cosh(r \lv \xi \rv) - 1) .
}
Here $\lV a \rV$ denotes the largest eigenvalue of the matrix $a$.

\begin{lemma}
\label{lem:bennett}
Let $r > 0$ and suppose that $\nu$ is concentrated on $B(0, r) \setminus \{0\}$. There are constants $c_1, c_2, c_3 > 0$ such that for $t, R > 0$, we have
\formula{
 \pr^x \bigl[ \lv X_t - x \rv \ge R \bigr] & \le \exp \bigl( c_1 + c_2 R - c_3 R \log (R / t) \bigr) .
}
\end{lemma}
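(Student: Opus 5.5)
We follow the standard Chernoff route, using the bound~\eqref{eq:laplace:bound} on the Laplace exponent. By space homogeneity we may take $x = 0$. First we reduce to one-dimensional tails. If $\lv X_t \rv \ge R$, then some coordinate satisfies $\lv X_t \cdot e_i \rv \ge R / \sqrt{d}$, so a union bound over the $2d$ directions $\pm e_i$ gives
\formula{
 \pr^0 \bigl[ \lv X_t \rv \ge R \bigr] & \le \sum_{i = 1}^d \sum_{\pm} \pr^0 \bigl[ \pm X_t \cdot e_i \ge R / \sqrt{d} \bigr] .
}
The factor $2d$ will be absorbed into $e^{c_1}$, and the factor $1/\sqrt{d}$ into $c_2$ and $c_3$.

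For a unit vector $u$ and $s > 0$, Markov's inequality applied to $\exp(s u \cdot X_t)$ gives
\formula{
 \pr^0 \bigl[ u \cdot X_t \ge \rho \bigr] & \le \exp\bigl( -s \rho + t \psi(s u) \bigr) \le \exp\bigl( -s \rho + t K (\cosh(r s) - 1) \bigr) .
}
Here $K$ is the constant in parentheses in~\eqref{eq:laplace:bound}. Using $\cosh(rs) - 1 \le e^{rs}$, the exponent is at most $-s\rho + tK e^{rs}$. We choose $s = r^{-1} \log\bigl(\rho / (Kt r)\bigr)$, which is essentially the Bennett optimizer. This choice requires $\rho > K t r$, i.e.\ the exponent inside the logarithm must be positive. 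It yields $t K e^{rs} = \rho / r$, and hence the exponent equals
\formula{
 -\frac{\rho}{r} \log \frac{\rho}{K t r} + \frac{\rho}{r} = -\frac{\rho}{r} \log \frac{\rho}{t} + \frac{\rho}{r}\bigl(1 + \log(K r)\bigr) .
}
With $\rho = R/\sqrt d$, this has the form $c_2 R - c_3 R \log(R/t)$ with $c_3 = 1/(r\sqrt d)$ and $c_2 = \bigl(1 + \lv \log(Kr) \rv + \log\sqrt d\bigr)/(r\sqrt d)$. The term $\log\sqrt d$ appears because $\log(\rho/t) = \log(R/t) - \log\sqrt d$.

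The main point needing care is the complementary regime $\rho \le K t r$, where the chosen $s$ is not positive. There we do not optimize at all and simply use the trivial bound $\pr \le 1$. It then suffices that the right-hand side $\exp(c_1 + c_2 R - c_3 R \log(R/t))$ is at least $1$ whenever $R/t \le \sqrt d K r$. In that regime $\log(R/t) \le \log(\sqrt d K r)$, so $-c_3 R \log(R/t) \ge -c_3 R \lv\log(\sqrt d K r)\rv$. Enlarging $c_2$ so that $c_2 \ge c_3 \lv \log(\sqrt d K r) \rv$ makes the exponent at least $c_1 \ge 0$, which covers this case. Taking $c_1 = \log(2d)$ and the maximum of the two admissible values of $c_2$ completes the argument. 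If $K = 0$ the process is trivial, and the bound holds for any $K > 0$ anyway, so we may assume $K > 0$.
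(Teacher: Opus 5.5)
Your proposal is correct and follows the paper's proof essentially step for step. Both use a Chernoff bound in each of the $2d$ directions $\pm e_j$ via \eqref{eq:laplace:bound} and $\cosh(r\lvert\xi\rvert)-1\le e^{r\lvert\xi\rvert}$, the same Bennett-type choice $\lvert\xi\rvert = r^{-1}\log\bigl(u/(\sigma^2 r t)\bigr)$ (your $K$ is the paper's $\sigma^2$), and the same constants $c_1=\log(2d)$ and $c_3=1/(r\sqrt d)$. The paper passes silently over two points that you make precise: the regime $\rho\le Ktr$, where the right-hand side is at least $1$, and the absolute value that guarantees $c_2>0$.
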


\begin{proof}
For every $\xi \in \R^d$, we have
\formula{
 \ex^x \bigl[ \exp(\xi \cdot (X_t - x)) \bigr] & = \exp(t \psi(\xi)) .
}
By Markov's inequality, for $u > 0$,
\formula{
 \pr^x \bigl[ \xi \cdot (X_t - x) \ge u \lv \xi \rv \bigr] & \le e^{-u \lv \xi \rv} \ex^x \bigl[ \exp(\xi \cdot (X_t - x)) \bigr] = \exp(t \psi(\xi) - u \lv \xi \rv) .
}
Combining this with~\eqref{eq:laplace:bound} and the inequality $\cosh(r \lv \xi \rv) - 1 \le \exp(r \lv \xi \rv)$, we find that
\formula[eq:bennett:markov]{
 \pr^x \bigl[ \xi \cdot (X_t - x) \ge u \lv \xi \rv \bigr] & \le \exp \bigl( \sigma^2 t e^{r \lv \xi \rv} - u \lv \xi \rv \bigr) ,
}
where
\formula{
 \sigma^2 & = \frac{\lV a \rV}{r^2} + \frac{1}{r^2} \int_{B(0, r) \setminus \{0\}} \lv x \rv^2 \nu(dx) .
}
The left-hand side of~\eqref{eq:bennett:markov} does not depend on the length of $\xi$, and for the optimal value $\lv \xi \rv = r^{-1} \log((\sigma^2 r t)^{-1} u)$, the right-hand side is equal to $\exp(r^{-1} u - r^{-1} u \log ((\sigma^2 r t)^{-1} u))$. It follows that this value is an upper bound for arbitrary $\xi \in \R^d$ and $u > 0$:
\formula[eq:bennett:projection]{
 \pr^x \bigl[ \xi \cdot (X_t - x) \ge u \lv \xi \rv \bigr] & \le \exp \biggl( \frac{u}{r} - \frac{u}{r} \log \frac{u}{\sigma^2 r t} \biggr) .
}

Note that if $\lv X_t - x \rv \ge R$, then for some coordinate $j$ we have $\lv e_j \cdot (X_t - x) \rv > d^{-1 / 2} R$. We set $\xi = \pm e_j$ and $u = d^{-1 / 2} R$ in~\eqref{eq:bennett:projection}, and sum over $j = 1, 2, \ldots, d$, to find that
\formula*[eq:bennett:constants]{
 \pr^x \bigl[ \lv X_t - x \rv \ge R \bigr] & \le \sum_{j = 1}^d \Bigl( \pr^x \bigl[ e_j \cdot (X_t - x) \ge d^{-1 / 2} R \bigr] + \pr^x \bigl[ (-e_j) \cdot (X_t - x) \ge d^{-1 / 2} R \bigr] \Bigr) \\
 & \le 2 d \exp \biggl( \frac{R}{d^{1 / 2} r} - \frac{R}{d^{1 / 2} r} \log \frac{R}{d^{1 / 2} \sigma^2 r t} \biggr) \\
 & = 2 d \exp \biggl( \frac{1 + \log(d^{1 / 2} \sigma^2 r)}{d^{1 / 2} r} R - \frac{1}{d^{1 / 2} r} R \log \frac{R}{t} \biggr) ,
}
as desired.
\end{proof}


\subsection{Time-varying domains and potentials}
\label{sec:varying}

In this section we consider time-inhomogeneous multiplicative functionals for our Lévy process $X_t$. More precisely, we fix a deterministic sequence of switching times
\formula{
 0 & = \tau_0 < \tau_1 < \tau_2 < \cdots \to \infty ,
}
a sequence of non-empty open sets $D_1, D_2, \ldots$\,, and a sequence of (time-homogeneous) multiplicative functionals $M_1(t), M_2(t), \ldots$\,, such that $D_n$ is the set of permanent points of $M_n(t)$. Our main goal is to study the process $X_t$ with time-dependent killing mechanism: on the time interval $[\tau_{n - 1}, \tau_n)$, $X_t$ is killed according to the multiplicative functional $M_n(t)$ (and so, in particular, upon exiting the set $D_n$).

In order to state our results rigorously, we often need to consider the process $X_t$ started at a given time $s \ge 0$ rather than at time $0$. In this case we write $\pr^{s, x}$ and $\ex^{s, x}$ for the corresponding probability and expectation. We understand that under $\pr^{s, x}$, $X_t$ is only defined for $t \ge s$.

By $\thet_s$ we denote the usual Markov shift operator, so that $X_t = X_{t - s} \circ \thet_s$ and $M_n(t) = M_n(s) \cdot M_n(t - s) \circ \thet_s$ when $0 \le s \le t$. When $0 \le s \le t$, we denote
\formula{
 M_n(s, t) & = M_n(t - s) \circ \thet_s ,
}
so that $M_n(t) = M_n(s) M_n(s, t)$. Our time-inhomogeneous multiplicative functional $M(t)$ is given by
\formula[eq:functional:varying]{
 M(t) & = \biggl(\prod_{n = 1}^{k - 1} M_n(\tau_{n - 1}, \tau_n) \biggr) M_k(\tau_{k - 1}, t) \qquad \text{if $\tau_{k - 1} \le t < \tau_k$, $k = 1, 2, \ldots$}
}
If $0 \le s \le t$, we define $M(s, t)$ so that $M(t) = M(s) M(s, t)$. More precisely, if $\tau_{k - 1} \le s \le t \le \tau_k$, we set
\formula{
 M(s, t) & = M_k(s, t) ,
}
while if $\tau_{j - 1} \le s < \tau_j \le \tau_{k - 1} \le t < \tau_k$, $j<k$, $j,k = 1, 2, \ldots$\,, then
\formula{
 M(s, t) & = M_j(s, \tau_j) \biggl(\prod_{n = j + 1}^{k - 1} M_n(\tau_{n - 1}, \tau_n) \biggr) M_k(\tau_{k - 1}, t) .
}
We stress that while $M_n(s, t) = M_n(t - s) \circ \thet_s$ for every $n = 1, 2, \ldots$\,, we do not generally have $M(s, t) = M(t - s) \circ \thet_s$.

Even though $M(t)$ is not a (time-homogeneous) multiplicative functional, we denote
\formula[eq:varying:heat]{
 P_t^M f(x) & = \ex^x \bigl[ f(X_t) M(t) \bigr]
}
whenever the integral makes sense. More generally, if $0 \le s \le t$, we write
\formula{
 P_{s, t}^M f(X_s) & = \ex^{s, x} \bigl[ f(X_t) M(s, t) \bigr] .
}
It is straightforward to verify that the pair $(X_t, M(t))$ has the (time-inhomogeneous) Markov property, and hence
\formula{
 \ex^x \bigl[ f(X_t) M(s, t) \big\vert \mathscr F_s \bigr] & = \ex^x \bigl[ f(X_t) M(s, t) \big\vert X_s \bigr] = \ex^{s, X_s} \bigl[ f(X_t) M(s, t) \bigr] = P_{s, t}^M f(X_s) .
}
Therefore,
\formula{
 P_t^M f(x) & = \ex^x \bigl[ f(X_t) M(s) M(s, t) \bigr] \\
 & = \ex^x \bigl[ M(s) \ex^x \bigl[ f(X_t) M(s, t) \big\vert \mathscr F_s \bigr] \bigr] \\
 & = \ex^x \bigl[ M(s) P_{s, t}^M f(X_s) \bigr] = P_s^M P_{s, t}^M f(x) .
}
In the same vein, if $0 \le r \le s \le t$, then
\formula{
 P_{r, t}^M f(x) & = P_{r, s}^M P_{s, t}^M f(x) .
}
Observe that if $\tau_n \le s \le t < \tau_{n + 1}$, then, by time-homogeneity of $X_t$ and $M_n(t)$,
\formula{
 P_{s, t}^M f(x) & = \ex^{s, x} \bigl[ f(X_t) M_n(s, t) \bigr] = \ex^x \bigl[ f(X_{t - s}) M_n(t - s) \bigr] = P_{t - s}^{M_n} f(x) .
}

We define
\formula[eq:varying:lambda]{
 \lambda_M & = \inf\{\lambda_{M_1} , \lambda_{M_2} , \ldots\} ,
}
and, abusing the notation, we say that $\lambda_M$ is the bottom of the spectrum of $(X_t, M(t))$.

\begin{lemma}
\label{lem:varying:l2}
For every $f \in L^2(\R^d)$ and $0 \le s \le t$,
\formula[eq:varying:l2]{
 \lV P_{s, t}^M f \rV_2 & \le \exp(-\lambda_M (t - s)) \lV f \rV_2 .
}
\end{lemma}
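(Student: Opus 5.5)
The plan is to reduce the estimate to the time-homogeneous bound $\lV P_t^{M_n} f \rV_2 \le \exp(-\lambda_{M_n} t) \lV f \rV_2$, recalled in the Preliminaries, and then chain these bounds through the evolution property $P_{r,t}^M = P_{r,s}^M P_{s,t}^M$ established above. Here $f \in L^2(\R^d)$, while $P_t^{M_n}$ lives on $L^2(D_n)$. Since $P_t^{M_n} f(x) = 0$ for $x \notin D_n$, and $P_t^{M_n} f$ only depends on $f \ind_{D_n}$ (because $M_n(t) = 0$ once $X$ leaves $D_n$), we have
\[
\lV P_t^{M_n} f \rV_2 = \lV P_t^{M_n} (f \ind_{D_n}) \rV_2 \le e^{-\lambda_{M_n} t} \lV f \ind_{D_n} \rV_2 \le e^{-\lambda_{M_n} t} \lV f \rV_2 .
\]
This also uses that $\lambda_{M_n}$ is the bottom of the spectrum of the self-adjoint generator of the symmetric semigroup $P_t^{M_n}$ on $L^2(D_n)$, which follows from symmetry of $X_t$ and the spectral theorem.

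First case: $s$ and $t$ lie in a single interval $[\tau_{k-1}, \tau_k]$. The identity $P_{s,t}^M f = P_{t-s}^{M_k} f$ was observed above for $s,t$ inside one interval. It extends to the closed endpoint $t = \tau_k$ because $M(s,\tau_k) = M_k(s,\tau_k)$ by definition. Then the bound above gives
\[
\lV P_{s,t}^M f \rV_2 \le e^{-\lambda_{M_k}(t-s)} \lV f \rV_2 \le e^{-\lambda_M (t-s)} \lV f \rV_2 ,
\]
using $\lambda_{M_k} \ge \lambda_M$ from \eqref{eq:varying:lambda} and $t - s \ge 0$.

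Second case: $\tau_{j-1} \le s < \tau_j \le \tau_{k-1} \le t < \tau_k$. Apply the evolution property repeatedly to write
\[
P_{s,t}^M = P_{s,\tau_j}^M P_{\tau_j,\tau_{j+1}}^M \cdots P_{\tau_{k-1},t}^M .
\]
This is consistent with the product formula defining $M(s,t)$: each factor $M_n(\tau_{n-1},\tau_n)$ corresponds to $P^{M_n}_{\tau_n - \tau_{n-1}}$. Each factor is an $L^2$ contraction with rate $e^{-\lambda_M(\text{length})}$ by the first case. Composing them multiplies the exponentials, and the lengths sum to $t - s$, which gives the bound.

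The only delicate point is measure-theoretic. The operators must be well defined on $L^2$ classes rather than pointwise, so that the evolution identity holds in $L^2$ and the composition is legitimate. I would handle this by first proving everything for bounded nonnegative $f \in L^2$, where all expectations make sense pointwise and the Markov-property computation above applies verbatim. For general $f$, I would split $f = f^+ - f^-$ and use monotone convergence on truncations $\min(f^\pm, m)$. The bounds are preserved in the limit because $\lv P_{s,t}^M f \rv \le P_{s,t}^M \lv f \rv$. This is routine, so no step poses a genuine obstacle.
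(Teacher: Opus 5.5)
Your proposal is correct and follows the paper's argument. The paper proves the bound by backward induction over the switching intervals, writing $P_{s,t}^M = P_{s,\tau_j}^M P_{\tau_j,t}^M$ and identifying $P_{s,\tau_j}^M$ with $P^{M_j}_{\tau_j - s}$; your product factorization over all intervals is that induction written out in full. Your extra remarks are correct refinements that the paper leaves implicit: passing from $L^2(\R^d)$ to $L^2(D_n)$, handling the closed endpoint $t = \tau_k$, and checking that the operators are well defined on $L^2$ classes.
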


\begin{proof}
We proceed by backward induction. Fix $t \ge 0$, and choose $k = 1, 2, \ldots$ such that $\tau_{k - 1} \le t < \tau_k$. If $\tau_{k - 1} \le s \le t$, then
\formula{
 \lV P_{s, t}^M f \rV_2 & = \lV P_{t - s}^{M_k} f \rV_2 \le \exp(-\lambda_{M_k} (t - s)) \lV f \rV_2 \le \exp(-\lambda_M (t - s)) \lV f \rV_2 ,
}
as desired.

Suppose that for some $j = 1, 2, \ldots, k - 1$, the estimate~\eqref{eq:varying:l2} holds when $\tau_j \le s \le t$, and fix $s$ such that that $\tau_{j - 1} \le s < \tau_j$. We have
\formula{
 \lV P_{s, t}^M f \rV_2 & = \lV P_{s, \tau_j}^M P_{\tau_j, t}^M f \rV_2 \\
 & = \lV P_{\tau_j - s}^{M_j} P_{\tau_j, t}^M f \rV_2 \\
 & \le \exp(-\lambda_{M_j} (\tau_j - s)) \lV P_{\tau_j, t}^M f \rV_2 \\
 & \le \exp(-\lambda_M (\tau_j - s)) \lV P_{\tau_j, t}^M f \rV_2 .
}
By the inductive hypothesis,
\formula{
 \lV P_{s, t}^M f \rV_2 & \le \exp(-\lambda_M (\tau_j - s)) \exp(-\lambda_M (t - \tau_j)) \lV f \rV_2 ,
}
that is, \eqref{eq:varying:l2} holds true when $\tau_j \le s \le t$. This completes the proof by induction.
\end{proof}

\begin{lemma}
\label{lem:varying}
If~\eqref{eq:ultracontractive} holds true, then for every nonnegative $f \in L^2(\R^d)$, $x \in \R^d$ and $0 \le s \le t$,
\formula{
 P_t^M f(x) & \le \lV P_s \rV_{2, \infty} \exp(-\lambda_M (t - s)) \lV f \rV_2 .
}
\end{lemma}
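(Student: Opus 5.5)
The plan is to split the time interval $[0,t]$ at time $s$, handling the initial segment crudely by ultracontractivity of the free semigroup and the remaining segment by the $L^2$ decay of Lemma~\ref{lem:varying:l2}. Since the statement is trivial when $s = t$ is not large enough for $\lV P_s \rV_{2,\infty}$ to be finite (the right-hand side is then $+\infty$), we may assume $\lV P_s \rV_{2,\infty} < \infty$.

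Using the time-inhomogeneous Markov property established above, we write $P_t^M f = P_{0,s}^M P_{s,t}^M f$; note that $P_t^M = P_{0,t}^M$ by definition of $M(0,t) = M(t)$. Set $g = P_{s,t}^M f$. Because $f \ge 0$ and $M(s,t) \ge 0$, we have $g \ge 0$. By Lemma~\ref{lem:varying:l2}, $g \in L^2(\R^d)$ with
\[
\lV g \rV_2 \le \exp(-\lambda_M (t - s)) \lV f \rV_2 .
\]

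Next we compare the killed evolution on $[0,s]$ with the free one. Since $0 \le M(s) \le 1$ (the functionals are nonincreasing and start at $1$ or $0$) and $g \ge 0$,
\[
P_{0,s}^M g(x) = \ex^x\bigl[g(X_s) M(s)\bigr] \le \ex^x\bigl[g(X_s)\bigr] = P_s g(x) \le \lV P_s \rV_{2,\infty} \lV g \rV_2 .
\]
Combining the two displays gives the claim.

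The only point needing care is the definition of $\lV P_s \rV_{2,\infty}$, which is a supremum over $f \in L^2(D)$, whereas $g$ lives in $L^2(\R^d)$. Since $X_t$ is a Lévy process, $P_s$ acts on all of $\R^d$ and the bound extends in the obvious way. If needed, one notes that $g$ vanishes outside the relevant permanent set, or simply interprets the norm over $L^2(\R^d)$, as is consistent with assumption~\eqref{eq:ultracontractive}. I also need $P_s g(x) \le \lV P_s g \rV_\infty$ pointwise for every $x$, not just almost everywhere. For a Lévy process with a square-integrable density $p_s$ this holds because $P_s g(x) = \int g(y) p_s(y - x)\,dy$ is continuous in $x$. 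I expect this technical issue to be the main (minor) obstacle; the rest is immediate.
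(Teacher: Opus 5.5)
Your proposal is correct and is essentially the paper's own proof: factor $P_t^M f = P_s^M P_{s,t}^M f$, use $0 \le M(s) \le 1$ to dominate by $P_s P_{s,t}^M f$, apply the $L^2\to L^\infty$ bound for $P_s$, and finish with Lemma~\ref{lem:varying:l2}. Your side remarks are reasonable clarifications of points the paper leaves implicit: the pointwise bound via Cauchy--Schwarz against the square-integrable density, and reading $\lV P_s \rV_{2,\infty}$ as a supremum over $L^2(\R^d)$.
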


\begin{proof}
We have
\formula{
 P_t^M f(x) & = P_s^M P_{s, t}^M f(x) \le P_s P_{s, t}^M f(x) \le \lV P_s \rV_{2, \infty} \lV P_{s, t}^M f \rV_2 .
}
The result thus follows from Lemma~\ref{lem:varying:l2}.
\end{proof}


\subsection{Decomposition of a Lévy process}
\label{sec:decomposition}

For a given $r > 0$, we decompose the Lévy process $X_t$ into two independent parts: a finite range Lévy process $\tilde X_t$, which includes the Brownian part of $X_t$ together with all jumps of $X_t$ of magnitude less than $r$, and a compound Poisson process $Z_t$, whose jumps coincide with the jumps of $X_t$ of magnitude at least $r$. We assume that $\tilde X_0 = X_0$ and $Z_0 = 0$.

By putting a tilde over a symbol we indicate that it pertains to $\tilde X_t$ instead of $X_t$. For example, we denote by $\tilde \tau_D$ the first exit time of $\tilde X_t$ from $D$. If $M_t$ is a Feynman--Kac functional~\eqref{eq:feynman:kac}, then we define a similar functional for $\tilde X_t$ by the formula
\formula{
 \tilde M_t & = \begin{cases} \displaystyle \exp\biggl(-\int_0^t V(\tilde X_s) ds\biggr) & \text{if $t < \tilde \tau_D$;} \\ 0 & \text{if $t \ge \tilde \tau_D$,} \end{cases}
}
and we also consider a version shifted by $y \in \R^d$:
\formula{
 \tilde M_t^y & = \begin{cases} \displaystyle \exp\biggl(-\int_0^t V(\tilde X_s + y) ds\biggr) & \text{if $t < \tilde \tau_{D - y}$;} \\ 0 & \text{if $t \ge \tilde \tau_{D - y}$.} \end{cases}
}
In the more general case when $V$ is a measure, we define $\tilde M_t$ and $\tilde M_t^y$ in the same vein, so that the corresponding Revuz measures are $V$ and $V(\cdot + y)$. Observe that the bottom of the spectrum of $(\tilde X_t, \tilde M^y_t)$ does not depend on $y$. We denote it by $\tilde \lambda_{\tilde M}$; the two tildes indicate that it is defined using the process $\tilde X_t$ with the (time-homogeneous) multiplicative functional $\tilde M$.

Since the processes $\tilde X_t$ and $Z_t$ are independent, we may assume that the probability $\pr^x$ is the product measure $\tilde \pr^x \times \pr_Z$, where $\tilde \pr^x$ is the probability for $\tilde X_t$, and $\pr_Z$ corresponds to $Z_t$ (since $Z_t = 0$, there is no need for the intial point in the notation $\pr_Z$). We can imagine $\tilde \ex^x$ to be the conditional expectation
\formula{
 \tilde \ex^x \bigl[ \Phi \bigr] & = \ex^x \big[ \Phi \big\vert Z \bigr] ,
}
which averages over all paths of $\tilde X_t$, but keeps the path of $Z_t$ random, but it will be more convenient to use the product space structure. Formally, we assume that $\Omega = \tilde \Omega \times \Omega_Z$, and we denote the elementary events as $\omega = (\tilde \omega, \omega_Z)$.

We fix a path of $Z_t$ (that is, fix $\omega_Z$), and we denote by $0 < \tau_1 < \tau_2 < \cdots \to \infty$ all times at which $Z_t$ makes a jump. We also let $\tau_0 = 0$. By $Y_n = Z_{\tau_{n - 1}}$ we denote the position of $Z_t$ before the $n$th jump. Observe that $X_t = \tilde{X}_t + Y_n$ when $\tau_{n - 1} \le t < \tau_n$, and so
\formula{
 M_t & = \biggl(\prod_{n = 1}^{k - 1} \tilde M^{Y_n}_{\tau_n - \tau_{n - 1}} \circ \thet_{\tau_{n - 1}} \biggr) \tilde M^{Y_k}_{t - \tau_{k - 1}} \circ \thet_{\tau_{k - 1}} \qquad \text{if $\tau_{k - 1} \le t < \tau_k$, $k = 1, 2, \ldots$}
}
That is, conditionally on the path of $Z_t$, the process $M_t$ can be viewed as a time-inhomogeneous multiplicative functional of $\tilde X_t$, as in~\eqref{eq:functional:varying}.

The bottom of the spectrum of $(\tilde X_t, \tilde M_t^y)$ does not depend on $y$ and it is equal to $\tilde \lambda_{\tilde M}$. Therefore, the bottom of the spectrum $\tilde \lambda_M$ of $(\tilde X_t, M_t)$, defined in~\eqref{eq:varying:lambda}, is equal to $\tilde \lambda_{\tilde M}$, the bottom of the spectrum of $(\tilde X_t, \tilde M_t)$.

Applying Lemma~\ref{lem:varying} to the pair $(\tilde X_t, M_t)$, with a fixed path of $Z_t$, we obtain the following result: if $0 \le s \le t$ and $f \in L^2(\R^d)$ is nonnegative, then
\formula{
 \tilde P_t^M f(x) & \le \lV \tilde P_s \rV_{2, \infty} \exp(-\tilde \lambda_M (t - s)) \lV f \rV_2 .
}
Here $\tilde P_t$ are the transition operators of $\tilde X_t$, $\tilde \lambda_M = \tilde \lambda_{\tilde M}$, and $\tilde P_t^M$ is defined as in~\eqref{eq:varying:heat}:
\formula{
 \tilde P_t^M f(x) & = \tilde \ex^x \bigl[ f(\tilde X_t) M_t \bigr] .
}
Note that the definition of $M$, and hence also $P_t^M$, depends on the path of $Z_t$ (that is, they are functions of $\omega_Z$). Averaging over all possible paths, we find that
\formula{
 \ex^x \bigl[ f(\tilde X_t) M_t \bigr] & = \ex_Z \bigl[ \tilde \ex^x \bigl[ f(\tilde X_t) M_t \bigr] \bigr] = \ex_Z \bigl[ \tilde P_t^M f(x) \bigr] \le \lV \tilde P_s \rV_{2, \infty} \exp(-\tilde \lambda_{\tilde M} (t - s)) \lV f \rV_2 .
}
This proves the following result.

\begin{corollary}
\label{cor:truncated}
Fix $r > 0$, and let $\tilde X_t$ be the process $X_t$ with all jumps of magnitude at least $r$ removed. Denote by $\tilde P_t$ the transition operators of $\tilde X_t$, by $\tilde M_t$ the mutliplicative functional of $\tilde X_t$ corresponding to the potential $V(x)$ (or the Revuz measure $V(dx)$) of the functional $M_t$, and by $\tilde \lambda_M$ the bottom of the spectrum of $(\tilde X_t, \tilde M_t)$. If $\tilde X_t$ satisfies~\eqref{eq:ultracontractive}, then for every nonnegative $f \in L^2(\R^d)$, $x \in \R^d$ and $0 \le s \le t$,
\formula{
 \ex^x \bigl[ f(\tilde X_t) M_t \bigr] & \le \lV \tilde P_s \rV_{2, \infty} \exp(-\tilde \lambda_M (t - s)) \lV f \rV_2 .
}
\end{corollary}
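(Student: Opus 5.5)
The argument has in effect already been laid out in the paragraphs just before Corollary~\ref{cor:truncated}; the proof will make it rigorous in three steps: realise $X_t$ as $\tilde X_t + Z_t$ on a product space, apply Lemma~\ref{lem:varying} to $\tilde X_t$ for each fixed path of $Z_t$, and then integrate over the paths of $Z_t$.

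\textbf{Step 1: the product structure.} Using the Lévy--Itô decomposition, we write $X_t = \tilde X_t + Z_t$. Here $Z_t$ is the compound Poisson process with Lévy measure $\nu|_{\{\lv y \rv \ge r\}}$, which is finite since $\nu$ is a Lévy measure. The process $\tilde X_t$ has Gaussian part $a$ and Lévy measure $\nu|_{B(0,r)\setminus\{0\}}$, and is independent of $Z_t$. We then take $\Omega = \tilde\Omega \times \Omega_Z$ with $\pr^x = \tilde\pr^x \times \pr_Z$.

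For $\pr_Z$-almost every $\omega_Z$, the jump times satisfy $0=\tau_0<\tau_1<\cdots\to\infty$, because $Z_t$ has finitely many jumps on bounded intervals. On $[\tau_{k-1},\tau_k)$ the path of $Z$ equals the constant $Y_k$. Hence $X_t = \tilde X_t + Y_k$ there, and $M_t$ coincides with the time-inhomogeneous functional \eqref{eq:functional:varying} of $\tilde X_t$ built from $M_k = \tilde M^{Y_k}$ and $D_k = D - Y_k$.

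This identification is clear when $V$ is a function: the integral $\int V(X_s)\,ds$ splits over the intervals, and $\tau_D$ is the first $k$-interval on which $\tilde X$ leaves $D - Y_k$. For a general Revuz measure, I would note the same splitting via the additive functional $-\log M_t$. If needed, I would restrict attention to the Feynman--Kac case, which is the case used in Theorem~\ref{thm:general}.

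\textbf{Step 2: apply Lemma~\ref{lem:varying} pathwise.} By translation invariance of $\form$ and of Lebesgue measure, the map $f\mapsto f(\cdot - y)$ is a unitary bijection between $\dom(\tilde\form_{\tilde M})$ and $\dom(\tilde\form_{\tilde M^y})$ that preserves the form. Therefore $\lambda(\tilde X,\tilde M^{Y_k}) = \tilde\lambda_{\tilde M}$ for all $k$, and the infimum \eqref{eq:varying:lambda} equals $\tilde\lambda_{\tilde M}$. Since \eqref{eq:ultracontractive} is assumed for $\tilde X_t$, Lemma~\ref{lem:varying} (applied to the Lévy process $\tilde X_t$) gives, for fixed $\omega_Z$,
\[
\tilde\ex^x\bigl[f(\tilde X_t)M_t\bigr] \le \lV \tilde P_s\rV_{2,\infty}\, e^{-\tilde\lambda_{\tilde M}(t-s)}\lV f\rV_2 .
\]

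\textbf{Step 3: integrate over $Z$.} By Fubini's theorem (everything is nonnegative), $\ex^x[f(\tilde X_t)M_t] = \ex_Z[\tilde\ex^x[f(\tilde X_t)M_t]]$. The bound from Step 2 is uniform in $\omega_Z$, so it survives the averaging and gives the claim.

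\textbf{Main obstacle.} The main obstacle is measurability and the rigorous identification of $M_t$, for each fixed $\omega_Z$, with a functional of the form \eqref{eq:functional:varying}. Two points need care:
\begin{itemize}
\item Exit times at the jump instants $\tau_k$: the jump of $X$ is a jump of $Z$, so exit at $\tau_k$ corresponds to $\tilde X_{\tau_k} \notin D - Y_{k+1}$, which is handled by the next factor $M_{k+1}$.
\item Joint measurability in $(\tilde\omega,\omega_Z)$, which is needed to use Fubini.
\end{itemize}
I would handle both by working on the canonical product space and noting that $M_t$ is a measurable functional of the pair of paths.
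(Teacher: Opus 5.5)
Your proposal is correct and follows the paper's own route: realise $\pr^x = \tilde\pr^x \times \pr_Z$, fix a path of $Z_t$ and identify $M_t$ with the time-inhomogeneous functional \eqref{eq:functional:varying} of $\tilde X_t$ built from the $\tilde M^{Y_n}$, note that the bottom of the spectrum of $(\tilde X_t, \tilde M^y_t)$ does not depend on $y$, apply Lemma~\ref{lem:varying} pathwise, and average over $Z$. Your translation-invariance argument for the spectral bottom and your remarks on exits at the jump times of $Z_t$ spell out details that the paper asserts without proof.
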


The Dirichlet form of the finite range Lévy process $\tilde X_t$ is given by
\formula{
 \tilde \form(f) & = \frac{1}{2} \int_{\R^d} \sum_{i, j = 1}^d a_{ij} \frac{\partial^2 f}{\partial x_i \partial x_j}(x) dx + \frac{1}{2} \int_{\R^d} \int_{B(0, r) \setminus \{0\}} (f(x + y) - f(x))^2 \nu(dy) dx ,
}
and that of the compound Poisson process $Z_t$ is
\formula{
 \form(f) - \tilde \form(f) & = \frac{1}{2} \int_{\R^d} \int_{\R^d \setminus B(0, r)} (f(x + y) - f(x))^2 \nu(dy) dx .
}
Using $(f(x + y) - f(x))^2 \le 2 (f(x + y))^2 + 2 (f(x))^2$, we find that
\formula{
 \form(f) - \tilde \form(f) & \le \int_{\R^d} \int_{\R^d \setminus B(0, r)} \bigl( (f(x + y))^2 + (f(x))^2 \bigr) \nu(dy) dx = 2 \nu(\R^d \setminus B(0, r)) \lV f \rV_2^2 .
}
Since the multiplicative functional $M_t$ of $X_t$ and the multiplicative functional $\tilde M_t$ of $\tilde X_t$ correspond to the same potential (or Revuz measure) $V$, we have
\formula{
 \tilde \form_{\tilde M}(f) - \tilde \form(f) & = \form_M(f) - \form(f) = \int_D \lv f(x) \rv^2 V(dx) .
}
It follows that
\formula{
 \form_M(f) - \tilde \form_{\tilde M}(f) & = \form(f) - \tilde \form(f) .
}
Recall that the bottom of the spectrum of $(\tilde X_t, \tilde M_t)$ on $D$ is equal to $\tilde \lambda_M$. By definition,
\formula{
 \tilde \lambda_M = \tilde \lambda_{\tilde M} = \inf_f \frac{\tilde \form_{\tilde M}(f)}{\lV f \rV_2^2} & = \inf_f \frac{\form_M(f) - (\form_M(f) - \tilde \form_{\tilde M}(f))}{\lV f \rV_2^2} = \inf_f \frac{\form_M(f) - (\form(f) - \tilde \form(f))}{\lV f \rV_2^2} \\
 & \ge \inf_f \frac{\form_M(f) - 2 \nu(\R^d \setminus B(0, r)) \lV f \rV_2^2}{\lV f \rV_2^2} = \lambda_M - 2 \nu(\R^d \setminus B(0, r)) ,
}
where the infimum is taken over all $f \in C_c^\infty(D)$. It follows that
\formula[eq:truncated:lambda]{
 \tilde \lambda_M & \ge \lambda_M - 2 \nu(\R^d \setminus B(0, r)) .
}


\subsection{Upper bound}
\label{sec:upper}

We are ready to prove the upper bound in~\eqref{eq:goal}.

\begin{lemma}
\label{lem:upper}
If~\eqref{eq:ultracontractive} holds true, then for every $x \in D$,
\formula{
 \limsup_{t \to \infty} \frac{\log P_t^M \ind(x)}{t} & \le -\lambda_M .
}
\end{lemma}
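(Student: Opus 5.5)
Fix $x \in D$ and $\eps > 0$. Choose $r > 0$ so large that $2\nu(\R^d \setminus B(0,r)) \le \eps$; by~\eqref{eq:truncated:lambda} the truncated process $\tilde X_t$ (jumps of size less than $r$ removed) then satisfies $\tilde\lambda_M \ge \lambda_M - \eps$. We decompose $X_t = \tilde X_t + Z_t$ as in Section~\ref{sec:decomposition}, with $Z_t$ compound Poisson. The plan is to split $P_t^M \ind(x) = \ex^x[M_t]$ according to whether $\tilde X_t$ stays in a ball of radius $R(t)$ around $x$ or not:
\formula{
 \ex^x[M_t] & \le \ex^x\bigl[\ind_{B(x, R)}(\tilde X_t) M_t\bigr] + \pr^x\bigl[\lv \tilde X_t - x \rv \ge R\bigr] .
}

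For the first term we use Corollary~\ref{cor:truncated} with $f = \ind_{B(x,R)}$, which lies in $L^2$ with $\lV f \rV_2 = c_d R^{d/2}$. Fixing $s_0$ large enough that $\lV \tilde P_{s_0} \rV_{2,\infty} < \infty$, this gives
\formula{
 \ex^x\bigl[\ind_{B(x,R)}(\tilde X_t) M_t\bigr] & \le \lV \tilde P_{s_0} \rV_{2,\infty} \, e^{-(\lambda_M - \eps)(t - s_0)} \, c_d R^{d/2} .
}
If we take $R = R(t) = K t$ with $K$ a constant, the factor $R^{d/2}$ is polynomial in $t$ and disappears after taking $\frac1t\log$.

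For the second term we apply Lemma~\ref{lem:bennett} to $\tilde X_t$, whose Lévy measure is concentrated on $B(0,r)\setminus\{0\}$:
\formula{
 \pr^x\bigl[\lv \tilde X_t - x \rv \ge Kt\bigr] & \le \exp\bigl(c_1 + c_2 K t - c_3 K t \log K\bigr) .
}
Choosing $K$ large so that $c_3 K \log K - c_2 K > \lambda_M$ makes this term decay faster than $e^{-\lambda_M t}$. Combining the two terms yields $\limsup_{t\to\infty} \frac1t \log P_t^M\ind(x) \le -\lambda_M + \eps$, and letting $\eps \to 0$ gives the claim.

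The main obstacle is justifying that $\tilde X_t$ itself satisfies~\eqref{eq:ultracontractive}, since the hypothesis is stated for $X_t$. I would argue through independence. The density of $X_t$ is the convolution of the law of $\tilde X_t$ with the law of $Z_t$, and $Z_t$ has an atom at $0$ of mass $e^{-t\nu(B(0,r)^c)}$. Hence $\pr(X_t \in A) \ge e^{-t\nu(B(0,r)^c)}\,\pr(\tilde X_t \in A)$, so $\tilde X_t$ has a square-integrable density (bounded by a constant times that of $X_t$) whenever $X_t$ does. This gives $\lV \tilde P_t \rV_{2,\infty} \le e^{t\nu(B(0,r)^c)}\lV p_t\rV_2 < \infty$ for large $t$. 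Care is needed with the form of~\eqref{eq:ultracontractive}: $P_t f$ bounded for $f \in L^2$ is equivalent to the density being in $L^2$, by the closed graph theorem or duality. One must also check that the constants are uniform in $x$, which holds by translation invariance.
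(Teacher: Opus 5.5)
Your proposal is correct and follows the paper's proof. It uses the same decomposition into $\tilde X_t$ and $Z_t$, the same split of $\ex^x[M_t]$ on $\{\lv \tilde X_t - x\rv < R\}$ with the two pieces handled by Corollary~\ref{cor:truncated} and Lemma~\ref{lem:bennett}, the same use of \eqref{eq:truncated:lambda}, and the same transfer of ultracontractivity to $\tilde X_t$ via the atom of $Z_t$ at $0$. The only difference is the radius: the paper takes $R = t^2$, which makes the large-deviation term superexponentially small with no tuning, whereas your linear choice $R = Kt$ (with $K$ fixed after $r$) works equally well and is the choice the paper itself makes in Section~\ref{sec:stable}.
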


\begin{proof}
Observe that $X_t = \tilde X_t$ with positive probability $q_t$, and so for $f \ge 0$ we have $P_t f(x) \ge q_t \tilde P_t f(x)$. In particular, if $P_t f$ is bounded, then $\tilde P_t f$ is bounded. This property extends to arbitrary $f \in L^2(\R^d)$, and it follows that if~\eqref{eq:ultracontractive} holds true for $X_t$, then it also holds for $\tilde X_t$.

We continue to use the notation introduced in the previous section. We fix $s \ge 0$ such that $\lV \tilde P_s \rV_{2, \infty}$ is finite, and we consider $R = t^2$. For $x \in \R^d$ and $t > s$, we write
\formula[eq:upper:decomposition]{
 P_t^M \ind(x) & = \ex^x \bigl[ M_t \bigr] = \ex^x \bigl[ \ind_{B(x, R)}(\tilde X_t) M_t \bigr] + \ex^x \bigl[ \ind_{\R^d \setminus B(x, R)}(\tilde X_t) M_t \bigr] .
}
We estimate the former term using Corollary~\ref{cor:truncated}, applied to $f = \ind_{B(x, R)}$. Since
\formula{
 \lV \ind_{B(x, R)} \rV_2 & = \lv B(0, 1) \rv^{1 / 2} R^{d / 2} = \lv B(0, 1) \rv^{1 / 2} t^d ,
}
we have
\formula{
 \ex^x \bigl[ \ind_{B(x, R)}(\tilde X_t) M_t \bigr] & \le \lV \tilde P_s \rV_{2, \infty} \exp(-\tilde \lambda_{\tilde M} (t - s)) \lv B(0, 1) \rv^{1 / 2} t^d .
}
Clearly,
\formula{
 \limsup_{t \to \infty} \frac{1}{t} \log \bigl( \ex^x \bigl[ \ind_{B(x, R)}(\tilde X_t) M_t \bigr] \bigr) & \le -\tilde \lambda_{\tilde M} .
}
The latter term in~\eqref{eq:upper:decomposition} is controlled by Lemma~\ref{lem:bennett}. We have
\formula{
 \ex^x \bigl[ \ind_{\R^d \setminus B(x, R)}(\tilde X_t) M_t \bigr] & \le \pr^x \bigl[ \lv \tilde X_t - x \rv \ge R \bigr] \\
 & \le \exp \bigl( c_1 + c_2 R - c_3 R \log (R / t) \bigr) \\
 & = \exp \bigl( c_1 + c_2 t^2 - c_3 t^2 \log t \bigr) ,
}
and hence
\formula{
 \limsup_{t \to \infty} \frac{1}{t} \log \bigl( \ex^x \bigl[ \ind_{\R^d \setminus B(x, R)}(\tilde X_t) M_t \bigr] \bigr) & \le \lim_{t \to \infty} \frac{c_1 + c_2 t^2 - c_3 t^2 \log t}{t} = -\infty .
}
Combining the above estimates with~\eqref{eq:upper:decomposition}, and using~\eqref{eq:truncated:lambda}, we conclude that
\formula{
 \limsup_{t \to \infty} \frac{P_t^M \ind(x)}{t} & \le -\tilde \lambda_{\tilde M} \le -\lambda_M + 2 \nu(\R^d \setminus B(0, r)) .
}
The desired result follows, because $\nu(\R^d \setminus B(0, r))$ can be made arbitrarily small by taking $r$ large enough.
\end{proof}

\begin{proof}[Proof of Theorem~\ref{thm:general}]
This is an immediate consequence of Lemmas~\ref{lem:lower} and~\ref{lem:upper}.
\end{proof}

%
%

\section{Isotropic stable Lévy processes}\label{sec:stable}

Let $\alpha\in\left(0,2\right)$. In this section we prove Theorem~\ref{thm:stable}. We consider the
case where $X_{t}$ is the isotropic $\alpha$-stable Lévy process
in $\mathbb{R}^{d}$ with 
\[
\nu\left(x\right)=c_{d,\alpha}\left|x\right|^{-d-\alpha},
\]
where $c_{d,\alpha}=2^{\alpha}\pi^{-d/2}\Gamma(\tfrac{d+\alpha}{2})|\Gamma(-\tfrac{\alpha}{2})|^{-1}$
and $a_{i,j}=0$ for $i,j=1,2,\dots,d$. 
\begin{lem}
\label{lem:HK-scaling}Let $\tilde{p}^{r}\left(t,x,y\right)$ be the
heat kernel for the finite range Lévy process $\tilde{X}_{t}$
with jumps at most $r$. Then, the following scaling property holds:
\[
\tilde{p}^{r}\left(t,x,y\right)=\frac{1}{r^d}\tilde{p}^{1}\left(\frac{t}{r^\alpha},\frac{x}{r},\frac{y}{r}\right)
\]
for all $t>0$ and $x,y\in\mathbb{R}^{d}$. 
\end{lem}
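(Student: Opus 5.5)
The plan is to derive the identity from the scaling property of the isotropic $\alpha$-stable process itself, which carries over to the truncated process. Set $X^{(r)}_t = r X_{t/r^\alpha}$, with $X_0 = 0$ for definiteness. Since $X_t$ is isotropic $\alpha$-stable, $X^{(r)}$ has the same law as $X$. Concretely, the characteristic exponent $\lvert\xi\rvert^\alpha$ is invariant under $\xi\mapsto r\xi$, $t\mapsto t/r^\alpha$; equivalently, the Lévy measure satisfies $\nu(r\,dy)\,r^{\alpha}=\nu(dy)$, because $c_{d,\alpha}\lvert ry\rvert^{-d-\alpha}r^d\,r^{\alpha} = c_{d,\alpha}\lvert y\rvert^{-d-\alpha}$.

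Next, track how the jump sizes transform under this map. A jump of $X$ of magnitude $\lvert\Delta\rvert$ at time $s$ becomes a jump of $X^{(r)}$ of magnitude $r\lvert\Delta\rvert$ at time $r^\alpha s$. Hence removing from $X$ all jumps of size at least $1$ corresponds exactly to removing from $X^{(r)}$ all jumps of size at least $r$. This correspondence is best verified on the level of Lévy--Khintchine exponents. The truncated process $\tilde X^{1}$ has exponent
\[
\psi_1(\xi)=\int_{B(0,1)\setminus\{0\}}(1-\cos(\xi\cdot y))\,\nu(dy).
\]
Substituting $y=z/r$ gives
\[
\tfrac{1}{r^\alpha}\psi_1(r\xi)=\int_{B(0,r)\setminus\{0\}}(1-\cos(\xi\cdot z))\,\nu(dz)=\psi_r(\xi),
\]
using the homogeneity of $\nu$. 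Therefore $r\tilde X^{1}_{t/r^\alpha}$ has the same law as $\tilde X^{r}_t$ (both started at $0$).

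The heat kernel identity then follows by a change of variables. By spatial homogeneity, $\tilde p^r(t,x,y)=\tilde p^r(t,0,y-x)$. For a Borel set $A$,
\[
\pr(\tilde X^r_t\in A)=\pr\bigl(\tilde X^1_{t/r^\alpha}\in A/r\bigr)=\int_{A/r}\tilde p^1(t/r^\alpha,0,u)\,du=\int_A r^{-d}\,\tilde p^1\bigl(t/r^\alpha,0,w/r\bigr)\,dw .
\]
Evaluating at $w=y-x$ and using homogeneity again in the form $\tilde p^1(s,0,(y-x)/r)=\tilde p^1(s,x/r,y/r)$ yields the claim, first for almost every $y$.

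The only delicate point is existence and regularity of the density, needed to upgrade "a.e." to "for all $x,y$". Here $\tilde X^1$ is a pure-jump process whose exponent satisfies $\psi_1(\xi)\ge c\lvert\xi\rvert^\alpha$ for large $\lvert\xi\rvert$. So $e^{-t\psi_1}$ is integrable, and the density is continuous, given by Fourier inversion. One can alternatively derive the identity directly from that Fourier formula, $\tilde p^r(t,0,w)=(2\pi)^{-d}\int e^{-i\xi\cdot w}e^{-t\psi_r(\xi)}d\xi$, via the substitution $\xi=\eta/r$, which gives the statement pointwise at once. I would present the Fourier route as the proof.
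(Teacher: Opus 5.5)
Your proposal is correct and, in the Fourier form you say you would present, is essentially the paper's proof. The paper likewise derives $\phi_r(\xi) = r^{-\alpha}\phi_1(r\xi)$ by rescaling the Lévy--Khintchine integral, then gets the identity pointwise from Fourier inversion with the substitution $\eta = r\xi$. Your added remark that $\psi_1(\xi) \ge c\lvert\xi\rvert^\alpha$ for large $\lvert\xi\rvert$, which makes $e^{-t\psi_1}$ integrable and justifies the inversion formula, supplies a detail the paper leaves implicit.
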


\begin{proof}
We include the details here for completeness. One can alternatively look at  \cite[Proposition 11.10]{sato1999levy}. 
Let $\phi_{r}$ be the characteristic (or Fourier) exponent of $\tilde{X}_{t}$; that is,
\[
\mathbb{E}^{x}\left[\exp\left(i\xi\cdot\left(\tilde{X}_{t}-x\right)\right)\right]=\exp\left(-t\phi_{r}\left(\xi\right)\right).
\]
Using the Lévy--Khintchine formula and the symmetry of $\nu$, one has that 
\begin{align*}
\phi_{r}\left(\xi\right) & =c_{d,\alpha}\int_{\left|x\right|<r}\left(1-\cos\left(\xi\cdot x\right)\right)\left|x\right|^{-d-\alpha}dx\\
 & =c_{d,\alpha}\int_{\frac{\left|ry\right|}{r}<1}\left(1-\cos\left(\xi\cdot ry\right)\right)\left|ry\right|^{-d-\alpha}r^{d}dy\\
 & =r^{-\alpha}c_{d,\alpha}\int_{\left|y\right|<1}\left(1-\cos\left(r\xi\cdot y\right)\right)\left|y\right|^{-d-\alpha}dy\\
 & =r^{-\alpha}\phi_{1}\left(r\xi\right).
\end{align*}
By using inverse Fourier transform with the substitution $\eta=r\xi$,
we have 
\begin{align*}
\tilde{p}^{r}\left(t,x,y\right) & =\left(2\pi\right)^{-d}\int_{\mathbb{R}^{d}}e^{-i\left(y-x\right)\cdot\xi}e^{-t\phi_{r}\left(\xi\right)}d\xi\\
 & =\left(2\pi\right)^{-d}\int_{\mathbb{R}^{d}}e^{-i\left(\frac{y-x}{r}\right)\cdot r\xi}e^{-\left(tr^{-\alpha}\right)\phi_{1}\left(r\xi\right)}d\xi\\
 & =\left(2\pi\right)^{-d}\int_{\mathbb{R}^{d}}e^{-i\left(\frac{y-x}{r}\right)\cdot\eta}e^{-\left(tr^{-\alpha}\right)\phi_{1}\left(\eta\right)}r^{-d}d\eta\\
 & =r^{-d}\tilde{p}^{1}\left(tr^{-\alpha},\frac{x}{r},\frac{y}{r}\right),
\end{align*}
as desired.
\end{proof}
We can now prove \eqref{MainEstimate-1} in the following theorem. 
\begin{thm}
Consider the isotropic $\alpha$-stable Lévy process with index $\alpha\in\left(0,2\right)$ and let $V$  be a non-negative, locally bounded Borel function on $D$.
For $t>1$ there exists a constant $C\geq1$ independent of $t$ (depending only on $d$, $\alpha$ and $\lambda_M$), such that 
\begin{equation}
P_{t}^{M}\ind\left(x\right)= \ex^x \biggl[ \exp \biggl( -\int_0^t V(X_s) ds \biggr) \ind_{\{\tau_D > t\}} \biggr] \leq   C\left(1+t\right)^{\frac{d}{2\alpha}+\frac{d}{2}}e^{-\lambda_D t}.
\end{equation}

\end{thm}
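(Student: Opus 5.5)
The plan is to rerun the proof of Lemma~\ref{lem:upper} with every parameter tied to $t$: the truncation level $r$, the ultracontractivity time $s$ and the ball radius $R$. The aim is to turn the $\limsup$ statement into a bound valid for each $t>1$. I would fix $t>1$ and truncate at $r=r(t):=t^{1/\alpha}$. For the stable process, $\nu(\R^d\setminus B(0,r)) = c\,r^{-\alpha}$ with $c=c(d,\alpha)$. So \eqref{eq:truncated:lambda} gives $\tilde\lambda_M \ge \lambda_M - 2c\,r^{-\alpha} = \lambda_M - 2c/t$. The factor $\exp(-\tilde\lambda_M(t-s))$ in Corollary~\ref{cor:truncated} is therefore at most $e^{2c}e^{\lambda_M s}e^{-\lambda_M t}$, and the truncation error costs only a constant.

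\textbf{Main term.} I would split $P_t^M\ind(x)$ as in \eqref{eq:upper:decomposition}, with a radius $R$ to be chosen. Corollary~\ref{cor:truncated} is applied with $s=1$ and $f=\ind_{B(x,R)}$. This needs a bound on $\lV\tilde P_1\rV_{2,\infty}$ that is uniform in $r\ge1$. Since $\tilde P_1$ is convolution with a symmetric density, $\lV\tilde P_1\rV_{2,\infty}^2=\tilde p^r(2,0,0)$. One route is the scaling of Lemma~\ref{lem:HK-scaling}, $\tilde p^r(2,0,0)=r^{-d}\tilde p^1(2r^{-\alpha},0,0)$, together with a small-time on-diagonal bound $\tilde p^1(u,0,0)\lesssim u^{-d/\alpha}$ for $u\le 2$. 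A more direct route uses $X_t=\tilde X_t$ on the event of no large jump:
\[
\tilde p^r(2,0,0)\le e^{2\nu(B(0,r)^c)}\,p(2,0,0)\le e^{2c}\,p(2,0,0).
\]
Either way the main term is at most $C\,R^{d/2}e^{-\lambda_M t}$, with $C$ depending only on $d,\alpha,\lambda_M$.

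\textbf{Tail term.} The second term is bounded by $\pr^x[\lv\tilde X_t-x\rv\ge R]$. Lemma~\ref{lem:HK-scaling} (equivalently, path scaling) converts this into a probability for the $r=1$ process at time $t r^{-\alpha}=1$:
\[
\pr^x\bigl[\lv\tilde X_t-x\rv\ge R\bigr]=\pr^0\bigl[\lv\tilde X^{1}_1\rv\ge R/r\bigr]\le\exp\bigl(c_1+c_2\rho-c_3\rho\log\rho\bigr),\qquad \rho=R/r,
\]
by Lemma~\ref{lem:bennett} with fixed constants. I would take $\rho = K t$ with $K$ large. Then $c_3\rho\log\rho - c_2\rho \ge \lambda_M t$ for all $t>1$, so the tail term is at most $e^{c_1}e^{-\lambda_M t}$. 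This choice gives $R = Kt^{1+1/\alpha}$, hence $R^{d/2}\asymp t^{\frac{d}{2}+\frac{d}{2\alpha}}$. Adding the two terms yields the claimed bound $C(1+t)^{\frac{d}{2\alpha}+\frac d2}e^{-\lambda_M t}$.

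\textbf{Expected obstacle.} The delicate step is keeping $\lV\tilde P_s\rV_{2,\infty}$ bounded uniformly in the $t$-dependent truncation $r$. This is why I fix $s=1$ rather than letting $s$ grow: a growing $s$ would cost $e^{\lambda_M s}$. The no-big-jump comparison handles this cleanly, since it only needs $\nu(B(0,r)^c)\le c$ for $r\ge1$. A second check is that $K$ can be chosen independently of $t$ and of $D,V$. This holds because $\rho\log\rho$ grows superlinearly in $t$, although $K$ then depends on $\lambda_M$, consistent with the statement.
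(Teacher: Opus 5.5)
Your proposal is correct and follows essentially the same route as the paper: truncation at $r=t^{1/\alpha}$, $s=1$ in Corollary~\ref{cor:truncated}, and $R=art$ (your $\rho=R/r=Kt$), with the polynomial factor coming entirely from $R^{d/2}$. The only differences are small improvements. Your no-big-jump comparison $\tilde p^r(2,0,0)\le e^{2\nu(B(0,r)^c)}p(2,0,0)$ replaces the paper's cited small-time on-diagonal bound for $\tilde p^1$, and reducing the tail to the $r=1$ process at time $1$ by scaling replaces the paper's explicit tracking of the $r$-dependent constants in Lemma~\ref{lem:bennett}.
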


\begin{proof}
Let $x\in D$ and $t>0$. For
\[
 r = t^{1 / \alpha} ,
\]
we consider the finite-range process $\tilde X_t$ from Lemma~\ref{lem:HK-scaling}. As in~\eqref{eq:upper:decomposition}, we consider the following decomposition:
\begin{align}
P_{t}^{M}\ind\left(x\right) & =\mathbb{E}^{x}\left[1_{B(x,R)}\left(\tilde{X}_t\right)M_{t}\right]+\mathbb{E}^{x}\left[1_{\mathbb{R}^{d}\setminus B(x,R)}\left(\tilde{X}_t\right)M_{t}\right]\nonumber \\
 & =I+II.\label{eq:Thm2pf-0}
\end{align}
Recall that $\tilde \lambda_M$ denotes the bottom of spectrum of $(\tilde X_t, M_t)$, where $M_t$ is treated as a time-inhomogeneous additive functional of $\tilde X_t$ (conditionally on the path of the compound Poisson process $X_t - \tilde X_t$).

\textbf{Step 1:} Bounding $\tilde{\lambda}_{M}$.

Recall that 
\begin{align*}
\tilde{\lambda}_M & \geq\lambda_{M}-2\nu\left(\mathbb{R}^{d}\setminus B\left(0,r\right)\right)\\
 & =\lambda_{M}-2c_{d,\alpha}^{\prime}r^{-\alpha};
\end{align*}
see~\eqref{eq:truncated:lambda}. Since $r = t^{1 / \alpha}$, we get
\begin{equation}
\tilde{\lambda}_M\geq\lambda_{M}-c_{0}t^{-1},\label{eq:Thm2pf-1}
\end{equation}
so that for every $t>s>0$ we have 
\begin{align}
e^{-\tilde{\lambda}_M\left(t-s\right)} & \leq e^{-\left(\lambda_{M}-c_{0}t^{-1}\right)\left(t-s\right)}\nonumber \\
 & =e^{-t\lambda_{M}+c_{0}+s\lambda_{M}-sc_{0}t^{-1}}\nonumber \\
 & \leq e^{c_{0}+\lambda_{M}s}e^{-\lambda_{M}t}.\label{eq:Thm2pf-2}
\end{align}

\textbf{Step 2:} Bounding $I$.

Choose $R=art$ for a constant $a>1$ (depending only on $d, \alpha$ and $\lambda_M$) to be chosen later. By Corollary \ref{cor:truncated} applied
to $f=1_{B(x,R)}$ together with \eqref{eq:Thm2pf-2},
we have 
\begin{align}
\mathbb{E}^{x}\left[1_{B(x,R)}\left(\tilde{X}_t\right)M_{t}\right] & \leq\left\Vert \tilde{P}_{s}\right\Vert _{2,\infty}\left\Vert 1_{B(x,R)}\right\Vert _{2}e^{-\tilde{\lambda}_M\left(t-s\right)}\nonumber \\
 & =\left\Vert \tilde{P}_{s}\right\Vert _{2,\infty}\left|B\left(0,1\right)\right|R^{d/2}e^{-\tilde{\lambda}_M\left(t-s\right)}\nonumber \\
 & \leq\left\Vert \tilde{P}_{s}\right\Vert _{2,\infty}\left|B\left(0,1\right)\right|R^{d/2}e^{c_{0}+\lambda_{M}s}e^{-\lambda_{M}t}.\label{eq:Thm2pf-3}
\end{align}
We will need a bound on 
\[
\left\Vert \tilde{P}_{s}\right\Vert _{2,\infty}=\left\Vert \tilde{p}_{s}^{r}\right\Vert _{2},
\]
where $\tilde{p}_{s}^{r}$ is the heat kernel of $\tilde{P}_{s}$
(and we now note that dependence on $r$). By the scaling result from
Lemma \ref{lem:HK-scaling} and Chapman Kolmogorov, we have that
\begin{align}
\left\Vert \tilde{P}_{s}\right\Vert _{2,\infty}^{2} & =\sup_{x\in\mathbb{R}^{d}}\left(\int_{\mathbb{R}^{d}}\tilde{p}^{r}\left(s,x,y\right)^{2}dy\right)\nonumber \\
 & =\sup_{x\in\mathbb{R}^{d}}\tilde{p}^{r}\left(2s,x,x\right)\nonumber \\
 & =\sup_{x\in\mathbb{R}^{d}}r^{-d}\tilde{p}^{1}\left(2sr^{-\alpha},\frac{x}{r},\frac{x}{r}\right)\label{eq:Thm2pf-3b}
\end{align}
In \cite[Proposition~2.1]{Chen-Kumagai-Kim} it was shown that for all
$u\in(0,2]$ there exists a constant $C>0$ such that we
have 
\begin{equation}
\tilde{p}^{(1)}\left(u,x,y\right)\leq Cu^{-d/\alpha}.\label{eq:Thm2pf-4}
\end{equation}
Applying \eqref{eq:Thm2pf-4} to \eqref{eq:Thm2pf-3b}
with $u=2sr^{-\alpha}$ (note that $u = 2 s / t \le 2$), we have that
\begin{equation}
\left\Vert \tilde{P}_{s}^{r}\right\Vert _{2,\infty}^{2}\leq r^{-d}C\left(2sr^{-\alpha}\right)^{-d/\alpha}=C^{\prime}s^{-d/\alpha}.\label{eq:Thm2pf-5}
\end{equation}
Recall that $R = a r t =a t^{1 + 1 / \alpha}$. Using \eqref{eq:Thm2pf-5} in \eqref{eq:Thm2pf-3},
we have
\begin{align*}
I=\mathbb{E}^{x}\left[1_{B(x,R)}\left(\tilde{X}_t\right)M_{t}\right] & \leq\left\Vert \tilde{P}_{s}\right\Vert _{2,\infty}\left|B\left(0,1\right)\right|R^{d/2}e^{c_{0}+\lambda_{M}s}e^{-\lambda_{M}t}\\
 & \leq Cs^{-\frac{d}{2\alpha}}\left|B\left(0,1\right)\right|a^{d/2}t^{\frac{d}{2\alpha}+\frac{d}{2}}e^{c_{0}+\lambda_{M}s}e^{-\lambda_{M}t}.
\end{align*}
Choosing $s=1$ we have that there exists a constant $C_{d,\alpha}>0$
such that for all $t>1$,
\begin{equation}
I\leq C_{d,\alpha}a^{d/2}e^{\lambda_{M}}t^{\frac{d}{2\alpha}+\frac{d}{2}}e^{-\lambda_{M}t}.\label{eq:Thm2pf-5b}
\end{equation}

\textbf{Step 3:} Bounding $II$

Note that by Lemma 3.2, we have that there exist constants $c_{1},c_{2},c_{3}>0$
such that for $t,R>0$ we have
\begin{align*}
II=\mathbb{E}^{x}\left[1_{\mathbb{R}^{d}\setminus B(x,R)}\left(\tilde{X}_t\right)M_{t}\right] & \leq\mathbb{P}^{x}\left[\left|\tilde{X}_t-x\right|\geq R\right]\\
 & \leq\exp\left(c_{1}+c_{2}R-c_{3}R\log\left(\frac{R}{t}\right)\right).
\end{align*}
From the proof Lemma 3.2 (see~\eqref{eq:bennett:constants}), the constants $c_{1},c_{2},c_{3}$ are given by
\[
c_1 = \log(2 d), \qquad c_{2}=\frac{1+\log\left(r\sigma^{2}\sqrt{d}\right)}{r\sqrt{d}},\qquad\text{and }c_{3}=\frac{1}{r\sqrt{d}},
\]
where 
\begin{align*}
\sigma^{2} & =\frac{\left\Vert a\right\Vert }{r^{2}}+\frac{1}{r^{2}}\int_{B\left(0,r\right)\setminus\left\{ 0\right\} }\left|x\right|^{2}d\nu\left(dx\right)\\
 & =0+\frac{1}{r^{2}}c_{d,\alpha}\int_{\left|x\right|<r}\left|x\right|^{2-d-\alpha}dx\\
 & =\frac{1}{r^{2}}c_{d,\alpha}'\int_{0}^{r}y^{2-d-\alpha}y^{d-1}dy\\
 & =\frac{1}{r^{2}}c_{d,\alpha}''r^{2-\alpha}\\
 & =c_{d,\alpha}''r^{-\alpha}.
\end{align*}

Plugging this in with $R=art$ and $r=t^{1/\alpha}$ we have that
\begin{align*}
II & \leq 2d \exp\left(\frac{1+\log\left(r\sigma^{2}\sqrt{d}\right)}{r\sqrt{d}}R-\frac{1}{r\sqrt{d}}R\log\left(\frac{R}{t}\right)\right)\\
 & = 2d \exp\left(\frac{1+\log\left(rc_{d,\alpha}''r^{-\alpha}\sqrt{d}\right)}{r\sqrt{d}}art-\frac{1}{r\sqrt{d}}art\log\left(\frac{art}{t}\right)\right)\\
 & = 2d \exp\left(\frac{c_{d,\alpha}'''+\log\left(t^{1/\alpha-1}\right)-\log\left(at^{1/\alpha}\right)}{\sqrt{d}}at\right)\\
 & = 2d \exp\left(\left(c_{d,\alpha}'''-\log\left(t\right)-\log\left(a\right)\right)\frac{at}{\sqrt{d}}\right)\\
 & \leq 2d \exp\left(\left(c_{d,\alpha}'''-\log\left(a\right)\right)\frac{at}{\sqrt{d}}\right)
\end{align*}
Choosing $a>0$ big enough so that $a c_{d,\alpha}'''+\lambda_{M}\sqrt{d}\leq a\log\left(a\right)$
shows that 
\begin{equation}
II\leq 2de^{-\lambda_{M}t}.\label{eq:Thm2pf-6}
\end{equation}

\textbf{Step 4:} Using \eqref{eq:Thm2pf-5b} and \eqref{eq:Thm2pf-6}
in \eqref{eq:Thm2pf-0} we have shown that there is a constant $C_{d, \alpha} \geq 1$ and an appropriate choice of $a > 0$ such that for all $t\geq1$,
\begin{align*}
P_{t}^{M}1\left(x\right) & \leq C_{d,\alpha}a^{d/2}e^{\lambda_{M}}t^{\frac{d}{2\alpha}+\frac{d}{2}}e^{-\lambda_{M}t}+2de^{-\lambda_{M}t}\\
 & \leq C_{d, \alpha, \lambda_{M}}' t^{\frac{d}{2\alpha}+\frac{d}{2}}e^{-\lambda_{M}t},
\end{align*}
as desired. 
\end{proof}

The proof of \eqref{MainEstimate-1} follows from \eqref{MainEstimate-2}. 

\section{Examples}\label{sec:examples}
In this section, we present several concrete examples of unbounded domains with positive bottom of spectrum to which our results apply. The examples are chosen to illustrate rather different geometric behaviors: a spiral region of finite width, a periodic domain with infinitely many holes, and an Anderson-type model. In particular, positivity of the bottom of spectrum does not require the domain to be bounded, convex, or simply connected. 

For the symmetric $\alpha$ stable process in $\mathbb R^2$, we recall two useful geometric criteria for positivity of the bottom of the spectrum. The first relates the bottom of the spectrum to the inradius of the domain. By a result of Bianchi and Brasco \cite[Theorem 1.1]{Bianchi-Brasco-2022}, if $1<\alpha<2$ and $D\subset\mathbb R^2$ is simply connected with finite inradius
\[ 
R_D=\sup_{x\in D}d(x,D^c), 
\]
then the fractional Makai--Hayman inequality gives 
\[
\lambda_D\geq C_\alpha R_D^{-\alpha}. 
\]
Another useful criterion is given by Chowdhury and Roy \cite[Theorem 1.3(i)]{ChowdhuryRoy}. For the full range $0<\alpha<2$, it is enough to assume that there exist constants $R,c>0$ such that
\begin{equation}\label{eq:uniform-complement} 
|D^c\cap B(x,R)|\geq c,\qquad x\in D. 
\end{equation} 
Condition \eqref{eq:uniform-complement} is easy to verify for a large class of domains, including tubular neighborhoods of curves resembling doodle drawings. Below we give a few examples to illustrate this.

\begin{example}[An unbounded spiral band, see Figure \ref{fig:spiral-band}] 
Let 
\[ 
\Gamma(\theta) =\{(r\cos \theta,r\sin\theta):r=a\theta, \theta\ge \theta_0\}, 
\]
be an Archimedean spiral, where $a>0$. Let $n(\theta)$ denote a choice of unit normal vector to $\Gamma$. Let $h(\theta)>0$ be a continuous function satisfying
\[
\lim_{\theta\to\infty}h(\theta)=h
\]
for some $h>0$. For $h>0$ sufficiently small and $\theta_0$ sufficiently large,   
\[ 
D_{\rm sp} =: \left\{ \Gamma(\theta)+r n(\theta): \theta\ge \theta_0,\ |r|<h(\theta) \right\} 
\] 
is an embedded tubular neighbourhood of $\Gamma$ and has no self intersections. At $\theta=\theta_0$ we close the domain by a smooth cap. Thus $D_{\rm sp}$ is an unbounded, simply connected and nonconvex domain which winds around the origin infinitely many times. Moreover, it is easy to verify that $D_{\rm sp}$ satisfies the condition in \cite[Theorem~1.3(i)]{ChowdhuryRoy}, and therefore \[ \lambda_{D_{\rm sp}}>0 \] for the symmetric $\alpha$ stable process for every $0<\alpha<2$.
\end{example}

\begin{example}[A periodic domain with square holes, see Figure \ref{fig:square-holes}]
Let
\[
Q_{k}
=
k+\left[-\frac14,\frac14\right]^2,
\qquad k\in\mathbb Z^2,
\]
and define
\[
D_{\rm per}
=
\mathbb R^2\setminus\bigcup_{k\in\mathbb Z^2}Q_k.
\]
Then $D_{\rm per}$ is an unbounded, connected and periodic domain with infinitely many holes. Moreover, it is easy to verify that $D_{\rm per}$ satisfies the condition \eqref{eq:uniform-complement}. Indeed, every point of $D_{\rm per}$ lies within a uniformly bounded distance from one of the square holes. Hence there exist constants $R,c>0$ such that
\[
|D_{\rm per}^c\cap B(x,R)|\geq c,
\qquad x\in D_{\rm per}.
\]
Therefore $\lambda_{D_{\rm per}}>0$ for the symmetric $\alpha$-stable process for every $0<\alpha<2$.
\end{example}

\begin{figure}[ht]
    \centering

    \begin{subfigure}[t]{0.48\textwidth}
        \centering
\begin{tikzpicture}[scale=0.25]

    \def\a{0.75}          
    \def\tmin{0.45}       
    \def\tmax{17.9}       
    \def\Xmax{8.2}        

    \pgfmathsetmacro{\xzero}{\a*\tmin*cos(deg(\tmin))}
    \pgfmathsetmacro{\yzero}{\a*\tmin*sin(deg(\tmin))}
    \pgfmathsetmacro{\hzero}{0.48 - 0.10/(\tmin+1)}
    \pgfmathsetmacro{\denzero}{sqrt(1+\tmin*\tmin)}
    \pgfmathsetmacro{\Tx}{(cos(deg(\tmin))-\tmin*sin(deg(\tmin)))/\denzero}
    \pgfmathsetmacro{\Ty}{(sin(deg(\tmin))+\tmin*cos(deg(\tmin)))/\denzero}
    \pgfmathsetmacro{\Nx}{(-sin(deg(\tmin))-\tmin*cos(deg(\tmin)))/\denzero}
    \pgfmathsetmacro{\Ny}{( cos(deg(\tmin))-\tmin*sin(deg(\tmin)))/\denzero}

    \begin{scope}
        \clip (-\Xmax,-\Xmax) rectangle (\Xmax,\Xmax);

        \fill[gray!30]
            plot[
                domain=\tmin:\tmax,
                samples=500,
                variable=\t
            ]
            (
                {
                    \a*\t*cos(deg(\t))
                    +(0.48-0.10/(\t+1))
                    *(-sin(deg(\t))-\t*cos(deg(\t)))
                    /sqrt(1+\t*\t)
                },
                {
                    \a*\t*sin(deg(\t))
                    +(0.48-0.10/(\t+1))
                    *( cos(deg(\t))-\t*sin(deg(\t)))
                    /sqrt(1+\t*\t)
                }
            )
            --
            plot[
                domain=\tmax:\tmin,
                samples=500,
                variable=\t
            ]
            (
                {
                    \a*\t*cos(deg(\t))
                    -(0.48-0.10/(\t+1))
                    *(-sin(deg(\t))-\t*cos(deg(\t)))
                    /sqrt(1+\t*\t)
                },
                {
                    \a*\t*sin(deg(\t))
                    -(0.48-0.10/(\t+1))
                    *( cos(deg(\t))-\t*sin(deg(\t)))
                    /sqrt(1+\t*\t)
                }
            )
            --
            plot[
                domain=180:0,
                samples=100,
                variable=\p
            ]
            (
                {
                    \xzero
                    + \hzero*cos(\p)*\Nx
                    - \hzero*sin(\p)*\Tx
                },
                {
                    \yzero
                    + \hzero*cos(\p)*\Ny
                    - \hzero*sin(\p)*\Ty
                }
            )
            -- cycle;

        \draw[thick]
            plot[
                domain=\tmin:\tmax,
                samples=500,
                variable=\t
            ]
            (
                {
                    \a*\t*cos(deg(\t))
                    +(0.48-0.10/(\t+1))
                    *(-sin(deg(\t))-\t*cos(deg(\t)))
                    /sqrt(1+\t*\t)
                },
                {
                    \a*\t*sin(deg(\t))
                    +(0.48-0.10/(\t+1))
                    *( cos(deg(\t))-\t*sin(deg(\t)))
                    /sqrt(1+\t*\t)
                }
            );

        \draw[thick]
            plot[
                domain=\tmin:\tmax,
                samples=500,
                variable=\t
            ]
            (
                {
                    \a*\t*cos(deg(\t))
                    -(0.48-0.10/(\t+1))
                    *(-sin(deg(\t))-\t*cos(deg(\t)))
                    /sqrt(1+\t*\t)
                },
                {
                    \a*\t*sin(deg(\t))
                    -(0.48-0.10/(\t+1))
                    *( cos(deg(\t))-\t*sin(deg(\t)))
                    /sqrt(1+\t*\t)
                }
            );

        \draw[thick]
            plot[
                domain=180:0,
                samples=100,
                variable=\p
            ]
            (
                {
                    \xzero
                    + \hzero*cos(\p)*\Nx
                    - \hzero*sin(\p)*\Tx
                },
                {
                    \yzero
                    + \hzero*cos(\p)*\Ny
                    - \hzero*sin(\p)*\Ty
                }
            );

        \draw[dashed]
            plot[
                domain=\tmin:\tmax,
                samples=500,
                variable=\t
            ]
            (
                {\a*\t*cos(deg(\t))},
                {\a*\t*sin(deg(\t))}
            );

    \end{scope}

    \draw[->] (-\Xmax-0.5,0) -- (\Xmax+0.7,0) node[right] {$x$};
    \draw[->] (0,-\Xmax-0.5) -- (0,\Xmax+0.7) node[above] {$y$};

\end{tikzpicture}
        \caption{An unbounded spiral band converging to finite width.}
        \label{fig:spiral-band}
    \end{subfigure}
    \hfill
    \begin{subfigure}[t]{0.48\textwidth}
        \centering
        \begin{tikzpicture}[scale=0.6]

            \fill[gray!30]
                (-3.5,-3.5) rectangle (3.5,3.5);

            \foreach \x in {-3,-2,-1,0,1,2,3} {
                \foreach \y in {-3,-2,-1,0,1,2,3} {
                    \filldraw[
                        fill=white,
                        draw=black,
                        thick
                    ]
                    ({\x-0.25},{\y-0.25})
                    rectangle
                    ({\x+0.25},{\y+0.25});
                }
            }

        \end{tikzpicture}

        \caption{A periodic unbounded domain with square holes.}
        \label{fig:square-holes}
    \end{subfigure}

    \caption{Two examples of unbounded domains.}
    \label{fig:unbounded-domains1}
\end{figure}

%


\subsection*{Acknowledgements}

MK is grateful to Kamil Kaleta for inspiring discussions related to this work. MK also thanks the anonymous piano players at Chicago O'Hare International Airport, where the core idea for the proof of Theorem 1.1 was conceived.

%
%

\bibliographystyle{plain}	
\bibliography{MainRef}

%
%

\end{document}